\documentclass[preprint,12pt]{elsarticle}
\usepackage{amsfonts,amsmath,amssymb}
\usepackage{MnSymbol}
\usepackage{graphicx}
\usepackage{amsthm}
\usepackage{euscript}
\usepackage{color}

\newtheorem{theorem}{Theorem}
\newtheorem{lemma}{Lemma}

\newtheorem{definition}{Definition}

\newcommand{\Mu}{\zeta}
\newcommand{\tT}{\intercal}
\newcommand{\bigo}[1]{O\left( #1 \right) }

\def\l{\left}
\def\r{\right}

\def\RR{\mathbb{R}}

\begin{document}
\begin{frontmatter}

\title{Numerical Solutions to Singular Reaction-Diffusion Equation over Elliptical Domains}
\author{Matthew A. Beauregard\footnote{Department of Mathematics, Clarkson University, Potsdam, NY 13699, USA({\tt mbeaureg@clarkson.edu})}}

\begin{abstract}
Solid fuel ignition models, for which the dynamics of the temperature is independent of the single-species mass fraction, attempt to follow the dynamics of an explosive event.  Such models may take the form of singular, degenerate, reaction-diffusion equations of the quenching type, that is, the temporal derivative blows up in finite time while the solution remains bounded.  Theoretical and numerical investigations have proved difficult for even the simplest of geometries and mathematical degeneracies.  Quenching domains are known to exist for piecewise smooth boundaries, but often lack theoretical estimates.  Rectangular geometries have been primarily studied.  Here, this acquired knowledge is utilized to determine new theoretical estimates to quenching domains for arbitrary piecewise, smooth, connected geometries. Elliptical domains are of primary interest and a Peaceman-Rachford splitting algorithm is then developed that employs temporal adaptation and nonuniform grids.  Rigorous numerical analysis ensures numerical solution monotonicity, positivity, and linear stability of the proposed algorithm.  Simulation experiments are provided to illustrate the accomplishments.
\end{abstract}

\begin{keyword}
nonlinear evolution equations, quenching, splitting, elliptical domains, finite difference
\end{keyword}


\end{frontmatter}

\section{Introduction}

Nonlinear evolution equations that form a singularity in finite time are ubiquitous in nature. Applications are broad appearing in models from chemistry, physics, biology, rocket, and combustion engineering.  Solid fuel ignition models attempt to approximate an explosive event, identified as the rate of change of the temperature increasing without bound and forming a singularity in finite time.  The formation of the singularity equates to ignition in the combustor.  Moreover, the finite time ignition is triggered if the temperature reaches a critical threshold. The sophistication of combustion models can be reduced considerably if one examines asymptotically close to the ignition time.  In such cases, the equation for the temperature decouples from the chemistry and mass-species fraction \cite{Bebernes_1989,Poinsot_2005}.  The final result is a highly nonlinear differential equation, in particular,
\begin{eqnarray}\label{Introduction:GeneralEq1}
s(x,y) u_t &=& \Delta u + f(u), ~ t>0,~ (x,y,t)\in \Omega \times (0,T), \\
\label{Introduction:GeneralEq2}u(x,y,0)&=&u_0(x,y),~ (x,y)\in \Omega,
\end{eqnarray}
where $\Omega\in\mathbb{R}^2,$ Dirichlet boundary conditions specified on the boundary $\partial \Omega,$ and $0\leq u_0(x,y)<1$ for $(x,y)\in\Omega.$ The source term, $f(u)$, is a highly nonlinear function. It is positive and strictly increasing for $0\leq u < 1.$  Most importantly, it tends to infinity when $u \rightarrow 1^-.$  The degeneracy function $s(x,y)\geq 0$ and models particular heat transportation characteristics within the domain.  While it remains positive within the domain it may vanish or become singular on a countable subset of points on the boundary. If the degeneracy function vanishes this is seen as a defect in the transportation of heat, while if it becomes singular it is a defect in the diffusion of heat. In either case, the degeneracy indicates a defect at a particular location of the combustor or, common to combustion engine designs, a special material concentration point \cite{Beauregard_2011,Kirk_2002,Poinsot_2005}.

A sufficient condition for the singularity to form in the temporal derivative is that $u \rightarrow 1^-$ as $t\rightarrow T^-<\infty$ in $\tilde{\Omega}\subseteq\Omega$ \cite{Sheng_2012}. In such cases, the solution $u$ is said to quench at the \textit{quenching time} $T$ over the domain $\Omega.$ The set $\tilde{\Omega}$ is called the {\em quenching location set,\/} and in this study involves a single point in the domain. On one hand, given an initial condition and domain shape there is no guarantee that the solution will quench.  One the other hand, it is known that for a piecewise smooth and connected domain there does exists a critical size for which the solution will always quench for all sizes larger \cite{Chan_2011}.  This was originally observed and proven in the one-dimensional case by Kawarada \cite{Kawarada_1975} for a particular $f(u),$ and then was extended to general source terms by Levine and Montgomery \cite{Levine_1980}. In higher dimensions, the results are not nearly as precise and finding estimates to the critical size that quenching persists for a particular shape is problematic. Fortunately, there are known theoretical estimates for rectangular domains \cite{Chan_1994}.  Numerical approximations have been employed to extend and resolve these estimates further \cite{Beauregard_2011,Liang_2006}. In this paper, stemming from the creation of lower and upper bound solutions to the singular problem, theoretical estimates of the critical quenching domain are established in \S 2 for arbitrary domains.  This provides an avenue of creating novel estimates for arbitrary domains, in particular when the domain shape is convex and will be used to provide additional verification of the numerical algorithm in the experiments of \S 6.

This paper is also interested in the development and analysis of an adaptive splitting algorithm that can accurately be used to explore the singular problem posed over an elliptical domain.  To date, numerical explorations have primarily focused on rectangular domains while circular and elliptical geometries may be more applicable to realistic applications.  In addition, the analysis of \S 2 provides valuable estimates to the critical quenching domains and are used to further validate the numerical scheme in the experiments of \S 6. Moreover, the intricate numerical methodologies, such as splitting, adaptation, and nonuniform grids requires extensions to such geometric considerations. Indeed, after careful design, the numerical analysis shows that the numerical scheme is indeed reliable, accurate, monotonically increases toward a steady state or quenching, and is weakly stable.

The paper is organized as follows. In the following section theoretical estimates for quenching domains for piecewise smooth, connected domains are given. It is evident that these results can be extended to blow-up problems.  In addition, as a corollary to the theorems, quenching time estimates can be established for arbitrary domain shapes and sizes. In \S 3 the study begins its focus on elliptical domains.  The equations are then written in elliptical coordinates.  Hence, the computation is designed over the rectangular grid generated by the coordinate transformation.  Appropriate boundary conditions are then established.  In \S 4 the adaptive, second order, splitting algorithm is given.  The numerical analysis of the algorithm is in \S 5.  In \S 6 numerous numerical experiments are offered, namely a statistical analysis of introduced errors, critical quenching domain calculations for elliptical domains of certain ratios of minor to major axes, and experimentally studying the effect of the mathematical degeneracy on the solution's fundamental characteristics. The paper is then concluded in \S 7.

\section{Theoretical Estimates of Quenching Domains}

For singular, reaction-diffusion equations of the quenching type one of most interesting features is the existence of a critical quenching domain size for a particular domain shape.  As one might expect, showing that this is indeed the case for arbitrary domains is a difficult task, especially in higher dimensions.  Recently, in \cite{Chan_2011}, it has been shown that for connected piecewise smooth domains there exists a critical quenching domain size. In realistic domain shapes, this novel result indicates this phenomena will persist, however, theoretical estimates of the critical size are lacking.  Even so, reliable theoretical estimates have been established for rectangular domains.  These estimates have been verified numerically in a number of resources \cite{Beauregard_2011,Sheng_2003,Liang_2006}.

The use of upper and lower solutions to differential equations has a long history in analysis of blow-up problems \cite{Bandle_1998}.  The fundamental idea can be traced back to \cite{Sattinger_1973}.  Consider the solution $u(x,y,t)$ of (\ref{Introduction:GeneralEq1})-(\ref{Introduction:GeneralEq2}).

Let $w(x,y,t)$ be a time-dependent lower bound of $u(x,y,t).$  Then if $w(x,y,t)\rightarrow 1$ in finite time $T$ then the quenching singularity forms in $u(x,y,t)$ at a quenching time of $T^*\leq T.$ Therefore, if one can establish a time-dependent lower bound of $u$ that approaches a value of one in finite time then $u$ will quench. Similarly, let $v(x,y,t)$ be a time-dependent upper bound of $u(x,y,t).$  Then if $v(x,y,t)<1$ for all time then clearly $u(x,y,t)$ will not quench.

\begin{definition}
A \textit{lower solution} of (\ref{Introduction:GeneralEq1})-(\ref{Introduction:GeneralEq2}) is a function $v(x,y,t)$ that satisfies
\begin{eqnarray*}
v_t &\leq& \Delta v + f(v),~~~ (x,y)\in \Omega,~t>0,\\
v&\leq& 0, ~ (x,y)\in \partial \Omega,~ t>0,\\
v&\leq& 0, ~ (x,y)\in \bar{\Omega},~ t=0.
\end{eqnarray*}
\end{definition}

\begin{definition}
An \textit{upper solution} of (\ref{Introduction:GeneralEq1})-(\ref{Introduction:GeneralEq2}) is a function $w(x,y,t)$ that satisfies
\begin{eqnarray*}
w_t&\geq& \Delta w + f(w), ~~~ (x,y)\in \Omega,~ t>0,\\
w&\geq& 0, ~ (x,y)\in \partial \Omega, ~ t>0,\\
w&\geq& 0, ~ (x,y)\in \bar{\Omega},~ t=0,
\end{eqnarray*}
\end{definition}

In this light, upper and lower solutions provide a valid tool of analysis for such problems. This idea is utilized in creating estimates to the critical size for arbitrary, connected, piecewise smooth domain shapes. Consider Nagumo's lemma (\cite{Walter_1970}, \S 24),

\begin{lemma}
Let $u(x,y,t)$ satisfy (\ref{Introduction:GeneralEq1})-(\ref{Introduction:GeneralEq2}) with reactive term $f(u)$.
If $v(x,y,t)$ is a lower solution then $v\leq u$ for $(x,y)\in \bar{\Omega}$ and $t>0.$ Similarly, if
$w(x,y,t)$ is an upper solution then $w\geq u$ for $(x,y)\in\bar{\Omega}$ and $t>0.$
\end{lemma}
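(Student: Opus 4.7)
The plan is to apply the parabolic comparison principle to the difference $z(x,y,t) = v(x,y,t) - u(x,y,t)$, reducing the differential inequality to one amenable to the weak maximum principle through an exponential rescaling. I will treat the lower-solution statement in detail; the upper-solution half follows symmetrically by working with $u-w$ in place of $v-u$ and reversing the inequalities.

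First I would subtract the PDE satisfied by $u$ from the differential inequality satisfied by the lower solution $v$. Since the degeneracy $s(x,y)$ is strictly positive in the interior $\Omega$ (it is allowed to vanish or blow up only on a countable subset of $\partial\Omega$), division by $s$ preserves the parabolic structure and yields
\begin{equation*}
z_t - \Delta z \;\leq\; f(v) - f(u), \qquad (x,y,t) \in \Omega \times (0,T),
\end{equation*}
together with $z \leq 0$ on $\partial\Omega$ and at $t=0$, obtained directly from the definition of a lower solution combined with the standing assumptions $u = 0$ on $\partial\Omega$ and $u_0 \geq 0$ in $\bar\Omega$.

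Next I would fix an arbitrary $T_0 < T^*$ strictly below the quenching time of $u$, so that $u$ and $v$ remain uniformly bounded away from $1$ on $\bar\Omega \times [0,T_0]$. On this range $f$ is smooth and admits a Lipschitz constant $L = L(T_0)$, giving $f(v) - f(u) \leq L\,z^{+}$ where $z^{+} = \max(z,0)$. Introducing $\tilde z = e^{-Lt}\,z$ converts the inequality into
\begin{equation*}
\tilde z_t - \Delta \tilde z \;\leq\; 0 \qquad \text{on the open set } \{\tilde z > 0\},
\end{equation*}
with $\tilde z \leq 0$ on the full parabolic boundary of $\Omega \times (0,T_0]$.

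The classical weak parabolic maximum principle then forbids $\tilde z$ from attaining a positive interior maximum: at any such hypothetical first-time maximum we would have $\tilde z_t \geq 0$ and $\Delta \tilde z \leq 0$, contradicting the strict increase from the nonpositive boundary data required to make $\tilde z$ positive in the interior. Therefore $\tilde z \leq 0$, hence $v \leq u$, on $\bar\Omega \times [0,T_0]$, and letting $T_0 \uparrow T^*$ extends this to the entire existence interval of $u$. The main obstacle is precisely the singular character of $f$ near $u = 1$, which rules out any global Lipschitz bound; the cutoff $T_0 < T^*$ is the device that sidesteps this difficulty and reduces the problem to a standard heat-operator comparison argument.
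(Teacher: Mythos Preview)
The paper does not supply its own proof of this lemma: it is stated as Nagumo's lemma and attributed to Walter (\cite{Walter_1970}, \S 24), so there is no in-paper argument to compare against. Your approach---subtract, linearize via a local Lipschitz bound, absorb the zeroth-order term with an exponential weight, and invoke the weak parabolic maximum principle---is exactly the classical proof one finds in Walter and related monographs, and it is essentially correct.

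Two small points are worth tightening. First, the degeneracy $s(x,y)$: the paper's Definitions~1 and~2 write the sub/super-solution inequalities \emph{without} the factor $s$, while the governing equation~(\ref{Introduction:GeneralEq1}) carries it. If one subtracts literally, the difference $z=v-u$ satisfies $s\,z_t - \Delta z \leq f(v)-f(u) + (s-1)u_t$ rather than the clean inequality you wrote; your displayed inequality is correct only under the tacit convention $s\equiv 1$, which is in fact how the paper uses the lemma in Theorems~1 and~2 (the comparison problems on $\Omega_{rect}^{(low)}$ and $\Omega_{rect}^{(upper)}$ are non-degenerate). You should either say this explicitly or carry the weight $s$ through the maximum-principle step, noting that $s>0$ in $\Omega$ suffices for the sign argument at an interior maximum. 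Second, your sentence ``so that $u$ and $v$ remain uniformly bounded away from $1$'' assumes something about $v$ that has not yet been shown; strictly, the lower-solution inequality is only asserted where $f(v)$ makes sense, so the argument should be run on the maximal interval where \emph{both} $u$ and $v$ lie in $[0,1)$ and then extended. Neither issue undermines the method; they are just places where the write-up should be more careful.
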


Clearly, as stated earlier, if the lower solution approaches one in the domain then $u$ will quench.  Given the shape of the domain $\Omega$ there exists a critical size for which the lower solution approaches a value of one.  Thus the lower solution's critical quenching domain size is a lower bound to the critical quenching domain size for $u.$  On the other hand, if the upper solution does not quench, that is approach a value of one, then neither does $u.$  Hence, the upper solution's critical quenching domain size is an upper bound to the critical quenching domain for $u.$

Define $\Omega_{rect}^{(low)}$ as the maximal rectangle that is a subset of $\Omega.$ Similarly, define $\Omega_{rect}^{(upper)}$ as the minimal rectangle that $\Omega$ is a subset of. Two theorems are now established.
\begin{theorem}
Let
\begin{eqnarray*}
v(x,y,t) = \left\{ \begin{array}{cl}{v^{(1)}(x,y,t)}&{(x,y)\in \Omega_{rect}^{(low)}, ~ t>0}\\{0}&{~~~~~\bar{\Omega} \setminus \Omega_{rect}^{(low)}, ~ t>0}\end{array}\right. ,
\end{eqnarray*}
where $v^{(1)}$ satisfies
\begin{eqnarray*}
v_t^{(1)} &=& \Delta v^{(1)} + f(v^{(1)}),~~~ (x,y)\in \Omega_{rect}^{(low)}~, ~t>0,\\
v^{(1)}&=& 0, ~~~~~~~~~~~~~~~~~~~~ (x,y)\in \partial \Omega_{rect}^{(low)},~ t>0,\\
v^{(1)}&=& 0, ~~~~~~~~~~~~~~~~~~~~ (x,y)\in \bar{\Omega}_{rect}^{(low)}~,~ t=0.
\end{eqnarray*}
then $v$ is a lower solution.
\end{theorem}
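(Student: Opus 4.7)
The plan is to verify the three conditions in the definition of a lower solution for the piecewise function $v$. I would start by checking the PDE inequality $v_t \leq \Delta v + f(v)$ in the classical sense on each of the two open subregions of $\Omega$. On $\Omega_{rect}^{(low)}$, the function $v = v^{(1)}$ satisfies the equation with equality by construction, so the inequality holds trivially. On the interior of $\bar\Omega \setminus \Omega_{rect}^{(low)}$, $v \equiv 0$, hence $v_t = \Delta v = 0$, and the inequality reduces to $0 \leq f(0)$, which is true since $f$ is positive on $[0,1)$. Continuity of $v$ across the interface $\partial \Omega_{rect}^{(low)} \cap \Omega$ is automatic because $v^{(1)} = 0$ there matches the exterior value of zero.

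The main obstacle is the behaviour of $v$ on that interface, where $\nabla v$ may jump even though $v$ itself is continuous, so the Laplacian is not defined pointwise there. My plan is to apply the parabolic Hopf boundary-point lemma to $v^{(1)}$: by the strong maximum principle, $v^{(1)} \geq 0$ in $\Omega_{rect}^{(low)}$ for $t>0$, since the initial and boundary data vanish and the source $f$ is nonnegative, and because $v^{(1)}$ attains its minimum value $0$ on the smooth portions of $\partial \Omega_{rect}^{(low)}$, the outward (from the rectangle) normal derivative satisfies $\partial v^{(1)}/\partial n \leq 0$ there. On the exterior side the normal derivative of $v$ vanishes, so the jump of $\partial v/\partial n$ across the interface is nonnegative. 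Interpreted distributionally, this contributes a nonnegative singular measure to $\Delta v$ supported on the interface, preserving the inequality $v_t \leq \Delta v + f(v)$ globally in the weak sense. An equivalent, slightly cleaner alternative is a mollification argument that bends the kink of $v$ downward in a thin neighbourhood of the interface and passes to the limit, or one can observe that $0$ is itself a subsolution on the exterior region, so gluing two subsolutions agreeing on the interface yields a weak subsolution on all of $\Omega$.

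Finally, the parabolic-boundary data are straightforward. On $\partial \Omega$, which lies in $\bar\Omega \setminus \Omega_{rect}^{(low)}$ except possibly at finitely many tangency points where both pieces agree by continuity, we have $v = 0$ and hence $v \leq 0$. At $t=0$, both $v^{(1)}$ (by its prescribed initial data) and the exterior extension vanish identically, so $v(\cdot,0) \equiv 0 \leq 0$ on $\bar\Omega$. These three checks verify the definition and establish that $v$ is a lower solution. The only delicate point in the whole argument is the interface treatment; once that is handled via Hopf's lemma or a mollification, the rest of the verification is routine bookkeeping.
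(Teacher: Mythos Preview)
Your argument follows the same outline as the paper's: verify the differential inequality piecewise on $\Omega_{rect}^{(low)}$ and on $\Omega\setminus\Omega_{rect}^{(low)}$, then check the boundary and initial data. The paper's proof is in fact considerably shorter than yours---it records exactly the piecewise check (equality on the rectangle, $0\le f(0)$ outside) and the fact that $v=0$ on $\partial\Omega$ and at $t=0$, and stops there.

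What you add is the treatment of the interface $\partial\Omega_{rect}^{(low)}\cap\Omega$, which the paper does not mention at all. Your use of the Hopf boundary-point lemma to control the sign of the normal-derivative jump, and hence to conclude that the distributional Laplacian picks up only a nonnegative singular contribution there, is a legitimate refinement: Nagumo's lemma in the form cited (Walter, \S24) is stated for classical $C^2$ functions, so a glued subsolution with a kink does require this kind of justification (or an appeal to a weak comparison principle). In that sense your proposal is more careful than the paper's own proof while following the identical strategy; the paper simply treats the interface issue as understood.
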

\begin{proof}
Since $f(0)>0$ then
\begin{eqnarray*}
v_t =\left\{ \begin{array}{ll}{v_t^{(1)}=\Delta v^{(1)} + f(v^{(1)})}&{(x,y)\in \Omega_{rect}^{(low)}, ~ t>0}\\{0\leq \Delta v + f(v)}&{~~~~~~\Omega\setminus\Omega_{rect}^{(low)}}\end{array}\right. .
\end{eqnarray*}
By definition, $v=0$ on $\partial \Omega$ and is zero initially.
\end{proof}

\begin{theorem}
Let $w(x,y,t)$ be a solution to
\begin{eqnarray*}
w_t &=& \Delta w + f(w),~~~ (x,y)\in \Omega_{rect}^{(upper)}~, ~t>0,\\
w&=& 0, ~~~~~~~~~~~~~~~ (x,y)\in \partial \Omega_{rect}^{(upper)},~ t>0,\\
w&=& 0, ~~~~~~~~~~~~~~~ (x,y)\in \bar{\Omega}_{rect}^{(upper)}~,~ t=0.
\end{eqnarray*}
Then $w$ restricted to the domain $\bar{\Omega}_{rect}^{(upper)} \bigcap~ \bar{\Omega}$ is an upper solution to $u.$
\end{theorem}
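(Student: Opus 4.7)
The plan is to verify the three conditions from the definition of an upper solution in turn, restricted to $\bar\Omega$. Write $R := \Omega_{rect}^{(upper)}$ for brevity, so that $\bar\Omega \subseteq \bar R$ and $\partial\Omega \subset \bar R$. Recall also that $f$ is positive and strictly increasing on $[0,1)$, so in particular $f(0) > 0$.

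Two of the three conditions are immediate. Because $w$ satisfies $w_t = \Delta w + f(w)$ with equality on all of $R \supseteq \Omega$, the required inequality $w_t \geq \Delta w + f(w)$ holds on $\Omega$ for $t>0$. Likewise, the prescribed initial data $w(x,y,0) = 0$ on $\bar R$ gives $w(x,y,0) \geq 0$ on $\bar\Omega$. The only substantive step is the boundary inequality $w \geq 0$ on $\partial\Omega$ for $t > 0$, since $\partial\Omega$ generally lies in the interior of $\bar R$, where no sign condition has been imposed by hand.

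For this I would invoke Nagumo's lemma on the enclosing rectangle $R$ using the trivial sub-solution $\tilde v \equiv 0$. Indeed, $\tilde v_t = 0 \leq f(0) = \Delta \tilde v + f(\tilde v)$ in $R$, with $\tilde v = 0$ on $\partial R$ and at $t=0$, matching the data of $w$. Nagumo's lemma then yields $w \geq \tilde v = 0$ throughout $\bar R$ for all $t > 0$, and restricting to $\partial \Omega \subset \bar R$ supplies the missing boundary inequality. The only potential obstacle is precisely this nonnegativity statement, and it is dispatched cleanly once one observes that the positivity of $f(0)$ makes zero a legitimate sub-solution on the enclosing rectangle; no separate maximum-principle argument or smoothness assumption beyond what the problem already carries is needed.
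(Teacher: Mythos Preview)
Your proof is correct and follows essentially the same route as the paper. The paper's argument is a terse two lines: ``Since $f(w)>0$ and is monotonically increasing then it is assured that $w\geq 0$ on $\Omega_{rect}^{(upper)}$. Thus $w\geq 0$ on $\partial \Omega \subseteq \bar{\Omega}_{rect}^{(upper)}$.'' You reach the same nonnegativity conclusion on the enclosing rectangle, but make the mechanism explicit by applying Nagumo's lemma (Lemma~1) on $R$ with $\tilde v\equiv 0$ as a lower solution; this is exactly the comparison the paper is implicitly invoking, so the two arguments coincide in substance.
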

\begin{proof}
Since $f(w)>0$ and is monotonically increasing then it is assured that $w\geq 0$ on $\Omega_{rect}^{(upper)}.$ Thus $w\geq 0$ on $\partial \Omega \subseteq \bar{\Omega}_{rect}^{(upper)}.$
\end{proof}

The above theorems imply that the quenching time can be bounded by that of quenching times found for rectangular domains.  This offers reliable bounds for convex domains, hence providing much needed direction and insight to verify numerical calculations of quenching domains.  It seems plausible that sufficient improvements can also be made for non-convex domains. The bounds provide a valid check to the consistency of quenching computations.  While the application here are motivated by quenching problems, the above results are easily transferrable to other singular problems, in particular blow-up problems.

These theorems will be used to help verify the numerical experiments of \S 6.  To illustrate the theorems, consider an elliptical geometry, that is,
\[ \Omega = \{ (x,y) | \frac{x^2}{A^2}+\frac{y^2}{B^2}<1\},\]
for some positive constants $A$ and $B$.  Then the maximal inscribed rectangle is
\[ \Omega_{rect}^{low} = \{ (x,y) | -\frac{A}{\sqrt{2}} < x < \frac{A}{\sqrt{2}} \& -\frac{B}{\sqrt{2}} < y < \frac{B}{\sqrt{2}}\}.\]
Similarly, the minimum rectangle that can be inscribed in $\Omega$ is
\[ \Omega_{rect}^{upper} = \{ (x,y) | -A < x < A \& 0 -B < y < B.\]
Let $f(u)=1/(1-u).$ For these rectangular shapes the critical quenching domain has been established \cite{Beauregard_2011,Sheng_2003}.  Hence, Theorem 2.2 can be used to offer theoretical bounds for particular ratios of minor and major axes $B/A$.  The results are offered in Table  \ref{TableQuenchingBoundsTheory}.

\begin{table}[h]
\begin{center}
\begin{tabular}{l|r|c|c}\hline
$B/A$ & Rect. Area & Elliptical Area Bounds             & AB Bounds \\ \hline
.125  & 18.8054    & 14.7697 $\leq \pi AB \leq$ 29.5395 & 4.7013 $\leq AB \leq$ 9.4027\\ \hline
.250  &  9.6722    &  7.5965 $\leq \pi AB \leq$ 15.1931 & 2.4181 $\leq AB \leq$ 4.8361\\ \hline
.375  &  6.8501    &  5.3801 $\leq \pi AB \leq$ 10.7601 & 1.7125 $\leq AB \leq$ 3.4251\\ \hline
.500  &  5.5986    &  4.3971 $\leq \pi AB \leq$  8.7943 & 1.3997 $\leq AB \leq$ 2.7993\\ \hline
.625  &  4.9679    &  3.9018 $\leq \pi AB \leq$  7.8036 & 1.2420 $\leq AB \leq$ 2.4840\\ \hline
.750  &  4.6453    &  3.6484 $\leq \pi AB \leq$  7.2968 & 1.1613 $\leq AB \leq$ 2.3226\\ \hline
.875  &  4.4964    &  3.5315 $\leq \pi AB \leq$  7.0629 & 1.1241 $\leq AB \leq$ 2.2482\\ \hline
\end{tabular}
\caption{Estimated quenching domains for ellipses with minor and major axis of $B$ and $A,$ respectively. For particular ratios $B/A$, the third column bounds the critical quenching area of the ellipse.  The last column shows the theoretical bounds for the critical product $AB$, which is related to third column by a multiple of $pi$.  As a comparison, the second column shows what the known critical quenching area of a rectangle with length to width ratio of $B/A.$ For brevity, quenching time bounds are not included.}
\label{TableQuenchingBoundsTheory}
\end{center}
\end{table}

\section{Elliptical Geometries}

Let $\Omega = \{ (x,y) ~|~ x^2/A^2 + y^2/B^2 < 1 \}.$ Without loss of generality, assume that $A$ and $B$ are the major and minor axes of the ellipse, respectively.  Hence $A>B.$  The cartesian and physical domain is mapped to a rectangular domain through the elliptical coordinate transformation, that is,
\begin{eqnarray}\label{GovEqs:EllipDef}
&& x=a \cosh(\mu) \cos(\theta), ~~~ y=a \sinh(\mu) \sin(\theta),
\end{eqnarray}
where $\theta\in[0,2\pi),$ $\mu\in [0,\Mu].$ The focal distance $a$ and $\Mu$ are uniquely determined through $A$ and $B$,
\begin{eqnarray*}
A=a\cosh(\Mu),~ B=a\sinh(\Mu).
\end{eqnarray*}
Let $S=\{ (\mu,\theta) ~|~ 0<\mu<\Mu,~ 0<\theta<2\pi\}$ and $\partial S$ be its boundary. Equations (\ref{Introduction:GeneralEq1})-(\ref{Introduction:GeneralEq2}) are written in elliptical coordinates,
\begin{eqnarray}\label{GovEqs:Ellip1}
~~~~~~~~~~~s(\mu, \theta)u_t &=& \phi(\mu,\theta)\l( u_{\mu\mu} + u_{\theta\theta}\r) + f(u),~ (\mu,\theta,t)\in S \times (0,T), \\
u(\mu,\theta,0)&=& u_0(\mu,\theta),~~ (\mu,\theta)\in S, \\
\label{GovEqs:Ellip3}u(\Mu,\theta,t)&=& 0,~~~ \theta\in(0,2\pi),~ t\in (0,T),
\end{eqnarray}
where,
\begin{eqnarray*}
\phi(\mu,\theta)=\frac{1}{a^2\l(\sinh^2(\mu) + \sin^2(\theta)\r)}
\end{eqnarray*}
is the Jacobian of the transformation. Periodic boundary conditions are assumed in $\theta.$

The result of the transformation requires the determination of a boundary condition for $\mu=0.$  Recall (\ref{GovEqs:EllipDef}).  Notice that if $(\mu,\theta)\mapsto (-\mu, 2\pi - \theta)$ the cartesian point $(x,y)$ is unchanged. Hence, $u(\mu,\theta)=u(-\mu, 2\pi-\theta).$  This useful fact has already been exploited in numerical approximations utilizing fourier series for elliptic partial differential equations \cite{Lia_2004}.

\section{Adaptive Numerical Splitting Scheme}

In higher dimensions there is a tremendous increase in the computational cost.  However, this increase can be greatly attenuated using splitting techniques, for which offer efficient, reliable, and effective ways to compute the numerical solution.  The idea is well-established and publicized, that is, convert the costly high-dimensional problem into a set of lower-dimensional ones that can be solved accordingly.  The splitting procedure may not rely on the commutativity of the involved matrix operators and the resulting error is, ideally, at the order of the original discretization.  While splitting procedures is an established approach, coupling splitting with temporal and/or spatial adaptation is not a common feature.  Hence, a primary motivator of this research is to further develop adaptive methods that incorporate operator splitting.

Let $v(t)$ denote the numerical solution of (\ref{GovEqs:Ellip1})-(\ref{GovEqs:Ellip3}) through a semidiscretization
in space. Then
\begin{eqnarray}\label{GovEqs:MOL}
v'(t) &=& C v(t) + g(v(t)),~ t\in(0,T),
\end{eqnarray}
where $v(0)$ is the initial condition stemming from $u_0(\mu,\theta),~ C=P+R,$ and $P$ and $R$ are matrices resulting from the discretization in the $\mu$ and $\theta$ directions.  The nonlinearity $g(v(t)$ is the discretized reactive term $f(u).$ The degeneracy and the Jacobian are also contained in these matrices.  An implicit solution is found in terms of the matrix exponential,
\begin{eqnarray*}
v(t) = \exp( C t) v(0) + \int_0^t \exp( C (t - s) g(v(s)) ds.
\end{eqnarray*}
A second order, Peaceman-Rachford splitting technique is combined with a trapezoidal rule for the integration to approximate the solution,
\begin{eqnarray}\label{GovEqs:PRsplit}
v(t) = S(t)v(0) + \frac{t}{2} \l( g(v(t)) + S(t)g(v(0))\r),
\end{eqnarray}
where
\begin{eqnarray}\label{GovEqs:PRsplittingoperator}
S(t) = (I - \frac{t}{2} R)^{-1}(I - \frac{t}{2} P)^{-1}(I + \frac{t}{2} P)(I + \frac{t}{2} R).
\end{eqnarray}
The Peaceman-Rachford splitting technique has been well documented in the literature and in depth study of its stability properties can be found in \cite{Schatzman_1999}. However, incorporating adaptation with splitting is still not fully understood and, again, is a primary motivator of this  research.

\subsection{Fully Discretized Scheme}

Nonuniform centered differences in the $\mu$ and $\theta$ directions are utilized.  Let $\theta_j=\theta_{j-1} + \Delta \theta_{j-1}$ for $j=1,\ldots,M+1,$ where $\theta_0=0$ and $\theta_{M+1}=2\pi.$  Similarly, let $\mu_i = \mu_{i-1} + \Delta \mu_{i-1}$ for $i=1,\ldots, N+1,$ where $\Delta \mu_i$ is the nonuniform discretization size, $\mu_{N+1}=\Mu,$ and $\mu_0 = -\Delta \mu_0/2.$ Defining $\mu_0$ and $\mu_1$ in this manner allows for straightforward use of the symmetry condition $u(\mu_0,\theta_j)=u(-\mu_1, 2\pi-\theta_j)$ for the boundary condition at the $\mu=0$ wall.  Furthermore, it avoids an auxiliary condition, as that found in utilizing polar coordinates \cite{Beauregard_2013CIRC}, at $\mu=0$ and $\theta=n \pi$ for $n=0$ and $1.$  Further, let $u_{i,j}(t)$ be an approximation of the exact solution of (\ref{GovEqs:Ellip1})-(\ref{GovEqs:Ellip3}) at the grid point $(\mu_i,\theta_j,t).$  Hence the semidiscretized equations to the solid fuel ignition model are,
\begin{eqnarray*}
\dot{u}_{i,j} &=& \psi_{i,j} \l( \delta_{\mu}^2 + \delta_{\theta}^2\r) u_{i,j}+\frac{1}{s_{i,j}}f(u_{i,j}),
\end{eqnarray*}
for $i=1,\ldots,N,$ $j=0,\ldots,M$, $\psi_{i,j}=\phi_{i,j}/s_{i,j},$ and $\delta_{\mu}^2$ and $\delta_{\theta}^2$ are nonuniform centered difference operators, namely,
\begin{eqnarray*}
\delta_{\mu}^2 u_{i,j} &=& \frac{2}{\Delta \mu_{i-1}(\Delta \mu_i + \Delta \mu_{i-1})}u_{i-1,j} - \frac{2}{\Delta \mu_i \Delta \mu_{i-1}} u_{i,j} + \frac{2}{\Delta \mu_i (\Delta \mu_i + \Delta \mu_{i-1})}u_{i+1,j},~~~~ \\
\delta_{\theta}^2 u_{i,j} &=& \frac{2}{\Delta \theta_{j-1}(\Delta \theta_j + \Delta \theta_{j-1})}u_{i,j-1} - \frac{2}{\Delta \theta_j \Delta \theta_{j-1}} u_{i,j} + \frac{2}{\Delta \theta_j (\Delta \theta_j + \Delta \theta_{j-1})}u_{i,j+1}.
\end{eqnarray*}
The $N(M+1)$ equations are augmented by the boundary conditions,
\begin{eqnarray*}
u_{N+1,j}=0, ~ u_{0,j} = u_{1,M+1-j}, ~ u_{i,-1}=u_{i,M}, ~ u_{i,M+1}=u_{i,0}.
\end{eqnarray*}
Denote $v(t)=(u_{1,0}, u_{2,0}, \ldots, u_{N,0}, u_{1,1}, \ldots, u_{N,M})^{\tT}.$  The semidiscretized equations mirror (\ref{GovEqs:MOL}) with $g(v)=Q f(v),$ where $Q$ is a $N(M+1)\times N(M+1)$ diagonal matrix, that is, $ Q = \mbox{diag}\l(Q^{(j)}\r),~ Q^{(j)} = \mbox{diag}\l(s_{i,j}^{-1}\r).$ The matrices $P$ and $R$ are both $N(M+1)\times N(M+1)$ matrices.  Define $T_{\mu}$ and $T_{\theta}$ as the $N\times N$ and $(M+1)\times (M+1)$ tridiagonal matrices stemming from the $\mu$ and $\theta$ difference operators, respectively, with two-sided Dirichlet boundary conditions.  Define $I_N$ and $I_{M+1}$ as the $N\times N$ and $(M+1)\times (M+1)$ identity matrices.  Further, define
\begin{eqnarray*}
C_{\mu} &=& \left(\begin{array}{ccccc}
{1}&{0}&{\ldots}&{0}&{0}\\
{0}&{0}&{\ldots}&{0}&{1}\\
{\vdots}&{\ddots}&{\ddots}&{\udots}&{0}\\
{0}&{0}&{1}&{0}&{0}\\
{0}&{1}&{0}&{\ldots}&{0}\end{array}\right)\in \mathbb{R}^{(M+1)\times (M+1)},\\
C_{\theta} &=& \left(\begin{array}{ccccc}
{0}&{0}&{\ldots}&{0}&{\kappa_1}\\
{\vdots}&{\ddots}&{\ddots}&{0}&{0}\\
{\vdots}&{\ddots}&{\ddots}&{\vdots}&{\vdots}\\
{0}&{0}&{\ldots}&{0}&{0}\\
{\kappa_2}&{0}&{\ldots}&{0}&{0}\end{array}\right)\in \mathbb{R}^{(M+1)\times (M+1)},\\
\hat{I}_N &=& \left(\begin{array}{cccc}
{\kappa_3}&{0}&{\ldots}&{0}\\
{0}&{0}&{\ddots}&{\vdots}\\
{\vdots}&{\ddots}&{\ddots}&{\vdots}\\
{0}&{0}&{\ldots}&{0}\end{array}\right)\in \mathbb{R}^{N\times N},
\end{eqnarray*}
where
\[ \kappa_1=\frac{2}{\Delta \theta_M(\Delta \theta_0 + \Delta \theta_M)}, ~~~~\kappa_2 = \frac{2}{\Delta \theta_M(\Delta \theta_{M-1} + \Delta \theta_M)}, ~~~ \kappa_3=\frac{2}{\Delta \mu_0(\Delta \mu_1 + \Delta \mu_0)}.\]
The matrices $P$ and $R$ are compactly written,
\begin{eqnarray*}
P &=& B\l( I_{M+1}\otimes T_{\mu} + C_{\mu} \otimes \hat{I}_N\r), \\
\label{Rmat}R &=& B\l( T_{\theta}\otimes I_{N} + C_{\theta} \otimes I_N\r),
\end{eqnarray*}
where $B=\mbox{diag}\l(B^{(j)}\r),~ B^{(j)} = \mbox{diag}\l(\psi_{i,j}\r).$  Thus, in light of (\ref{GovEqs:PRsplit}) and (\ref{GovEqs:PRsplittingoperator}) the fully discretized equations are,
\begin{eqnarray}\label{NumericalScheme:FullyDiscretized}
v_{k+1} = S(\tau_k))v_k + \frac{\tau_k}{2} \l( g(w_{k+1}) + S(\tau_k)g(v_k)\r),
\end{eqnarray}
where $\tau_k=t_{k+1}-t_k$ is the nonuniform temporal step and $w_{k+1}$ is a suitable approximation to $v_{k+1}$.  Commonly, $w_{k+1}$ is calculated through an explicit Euler approximation, however this choice reduces the overall order of the approximation to unity.  In particular, a midpoint method can be used.  From Taylor expansions, let
$$ w_{k+1} = g(v_k) + \tau_k J_k~ q(v_k),$$
where
\begin{eqnarray*}\label{NumericalScheme:FullyDiscretizedMidPoint2}
q(v_k) &=& \l[ I + \frac{\tau_k}{2}\l( C + J_k \r) \r] \l( C v_k + g(v_k) \r),
\end{eqnarray*}
and $J_k$ is the Jacobian of $g(v_k),$ a diagonal matrix. Thus a second order, one-step method can be established.
\begin{eqnarray}\label{NumericalScheme:FullyDiscretizedMidPoint}
v_{k+1} = S(\tau_k))\l( v_k + \frac{\tau_k}{2} g(v_{k})\r) + \frac{\tau_k}{2} \l(~ g(v_k) + \tau_k J_k~ q(v_k)~\r)
\end{eqnarray}

\subsection{Adaptation}

As the solution increases the rate of change function may become unbounded, that is quench.  In such situations it becomes necessary to reduce the temporal step in order to better capture the quenching time and solution dynamics.  The manner for which the reduction in the temporal step should be done is debateable.  However, it seems plausible that if the temporal derivative is monitored then equidistributing its arc-length can provide a reasonable reduction.  This approach was pioneered for quenching problems in \cite{Sheng_2003} and experimental results have indicated favorable results.

The idea of equidistribution is not a new concept and was first introduced by \cite{Boor_1973}.  In the current situation, the equidistribution approach amounts to establishing the temporal step such that no
characteristic line grows faster than a user-specified tolerance. In particular, the temporal
step may be selected such that the resulting change in the solution is not greater than the
change experienced along that characteristic in the previous advancement. Hence, each
characteristic generates a possible temporal step for which the new temporal step is taken
as the minimal value over that set. This results in an implicit algebraic equation that is used to construct the new temporal step, that is,
$$\l(v_{k+1}'-v_{k}'\r)^2\cdot {e}_j + \l(\tau_k^{(j)}\r)^2 =
\l( v_{k}'-v_{k-1}' \r)^2\cdot {e}_j + \tau_{k-1}^2,$$
where $v_{\ell},~v_{\ell}'$ are the solution and derivative vectors defined through
(\ref{NumericalScheme:FullyDiscretizedMidPoint2}) at temporal levels $\ell,$ respectively, and $e_j\in\RR^N$ is the
$j$th unit vector, $1\leq j\leq N.$ The notation $\l({v}\r)^p$ means that each of the vector's
components is raised to the power $p.$ The initial step $\tau_0$ is given. The updated
temporal step $\tau_k$ is taken to be the minimum, that is,
$$\tau_k^2 = \tau_{k-1}^2 + \min_j\l\{\l[\l( v_k'-v_{k-1}' \r)^2-
\l( v_{k+1}'-v_k' \r)^2\r]\cdot e_j\r\},~~~ k=1,2,\ldots.$$
Of course, the above procedure requires a sufficient amount of data to be accumulated. This presents no difficulty as the temporal adaptation happens late in the computation when the necessary information is available.  If this is not the case, then a simple reduction in the temporal step can be performed to generate the required information. Needless to say, a healthy amount of recent publications on semi-adaptive methods, including
\cite{Beauregard_2011,Beauregard_2012MIT,Sheng_2003,Liang_2006,Qiao_2011}, implement such a procedure.

\section{Numerical Analysis}
A fundamental feature of the continuous solution of (\ref{Introduction:GeneralEq1})-(\ref{Introduction:GeneralEq2}) is that, provided $s(x,y)\geq 0$, it monotonically increases toward a steady state or quenches in finite time \cite{Chan_2011,Nouaili_2011}.  Further, it remains positive provided the initial condition is initially greater than or equal to zero.  This fundamental feature of the mathematical model is a requirement of any reliable numerical computation. Ultimately, this rests on proving necessary conditions that guarantee positivity of the involved matrices.  While it is recognized that alternative approaches to guarantee a positive and monotonically increasing numerical solution is possible, this approach is sharp in the sense that a violation of any of the hypothesis results in a loss of invertibility of particular matrices, thus rendering the calculation useless. Here, a new and general theorem is presented that solidifies the necessary requirements to guarantee positivity and monotonicity of the numerical solution for any discretization that takes the form of (\ref{NumericalScheme:FullyDiscretized}).

\begin{theorem}\label{GeneralMonotonicThm}
For any beginning time step $\ell\geq 0$ and initial condition $v_{\ell}>0$ such that $C v_{\ell} + g(v_{\ell})>0$, if the matrices
\[ (I - \frac{\tau_k}{2} R)^{-1}, ~(I - \frac{\tau_k}{2} P)^{-1}\]
are positive,
\[(I + \frac{\tau_k}{2} P), ~ (I + \frac{\tau_k}{2} R),~ (I+\tau_k C)\]
are nonnegative, $g(v)$ is a strictly increasing and positive function for $v\in[0,1)$, and $\tau_k$ is sufficiently small, then the sequence $\{v_k\}_{k\geq \ell}$ generated by (\ref{NumericalScheme:FullyDiscretizedMidPoint}) monotonically increases until unity is exceeded by a component of the solution vector $v_k$ or steady state is attained for both constant and variable $\tau_k$, $k\geq \ell$.
\end{theorem}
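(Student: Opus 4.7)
The plan is to proceed by induction on $k$ with induction hypothesis that $v_k > 0$ componentwise together with $v_k' := Cv_k + g(v_k) > 0$, which holds at $k = \ell$ by assumption. Under this hypothesis I will show $v_{k+1} \geq v_k$ and that $v_{k+1}'$ remains positive, so the induction continues as long as no component of $v_k$ has reached unity and steady state has not set in.

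First, I gather the positivity facts. Because $(I - \frac{\tau_k}{2} R)^{-1}$ and $(I - \frac{\tau_k}{2} P)^{-1}$ are positive, the matrix $M(\tau_k) := (I - \frac{\tau_k}{2}R)^{-1}(I - \frac{\tau_k}{2}P)^{-1}$ is positive, and $S(\tau_k) = M(\tau_k)(I + \frac{\tau_k}{2}P)(I + \frac{\tau_k}{2}R)$ is nonnegative. Since $g$ is strictly increasing on $[0,1)$, its Jacobian $J_k$ is a nonnegative diagonal matrix; combined with the hypothesis $(I + \tau_k C) \geq 0$, this gives $(I + \frac{\tau_k}{2}(C + J_k)) \geq 0$, from which $q(v_k) = [I + \frac{\tau_k}{2}(C + J_k)] v_k' \geq 0$ and $J_k q(v_k) \geq 0$ under the induction hypothesis.

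The heart of the argument is an algebraic rewrite of the increment. A direct expansion, not requiring $P$ and $R$ to commute, shows that $(I + \frac{\tau_k}{2}P)(I + \frac{\tau_k}{2}R) - (I - \frac{\tau_k}{2}P)(I - \frac{\tau_k}{2}R) = \tau_k C$ and $(I + \frac{\tau_k}{2}P)(I + \frac{\tau_k}{2}R) + (I - \frac{\tau_k}{2}P)(I - \frac{\tau_k}{2}R) = 2I + \frac{\tau_k^2}{2} PR$, so $S(\tau_k) - I = \tau_k M(\tau_k) C$ and $S(\tau_k) + I = 2 M(\tau_k) + \frac{\tau_k^2}{2} M(\tau_k) PR$. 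Substituting these into (\ref{NumericalScheme:FullyDiscretizedMidPoint}) and collecting terms, the increment becomes $v_{k+1} - v_k = \tau_k M(\tau_k) v_k' + \frac{\tau_k^3}{4} M(\tau_k) PR\, g(v_k) + \frac{\tau_k^2}{2} J_k q(v_k)$. The leading term $\tau_k M(\tau_k) v_k'$ is strictly positive by the preceding step, and $\frac{\tau_k^2}{2} J_k q(v_k) \geq 0$; invoking the smallness hypothesis on $\tau_k$, the $O(\tau_k)$ contribution dominates the sign-indefinite middle term and $v_{k+1} > v_k$ componentwise.

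To close the induction I verify $v_{k+1}' > 0$. The mean value theorem yields $g(v_{k+1}) - g(v_k) = \tilde J_k (v_{k+1} - v_k)$ with nonnegative diagonal $\tilde J_k$, so $v_{k+1}' = v_k' + (C + \tilde J_k)(v_{k+1} - v_k)$. Substituting the increment formula and factoring, the coefficient matrix acting on $v_k'$ is $I + \tau_k (C + \tilde J_k) M(\tau_k) + O(\tau_k^2)$; using $(I + \tau_k C) \geq 0$, nonnegativity of $\tilde J_k$ and $M(\tau_k)$, and smallness of $\tau_k$, this matrix is nonnegative and $v_{k+1}' > 0$. The iteration therefore persists until either a component of $v_k$ crosses unity (so $g$ leaves its specified domain) or $v_k' = 0$ (steady state). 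The main obstacle is the $O(\tau_k^3)$ term $\frac{\tau_k^3}{4} M(\tau_k) PR\, g(v_k)$ and its analog in the induction propagation: because $P$ and $R$ do not commute and $PR$ is sign-indefinite, monotonicity rests essentially on the quantitative smallness of $\tau_k$ rather than on a purely structural positivity argument.
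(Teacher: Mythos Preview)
Your proof is correct and at the same level of rigor as the paper's, but the organization differs. The paper splits into two cases. For constant $\tau_k\equiv\tau_\ell$ it inducts on the hypothesis $v_k-v_{k-1}>0$ (not on $v_k'>0$): subtracting consecutive instances of (\ref{NumericalScheme:FullyDiscretizedMidPoint}) gives
\[
v_{k+1}-v_k = S(\tau_\ell)\Bigl(v_k-v_{k-1}+\tfrac{\tau_\ell}{2}\bigl(g(v_k)-g(v_{k-1})\bigr)\Bigr)+\tfrac{\tau_\ell}{2}\bigl(g(w_{k+1})-g(w_k)\bigr),
\]
and positivity follows from $S(\tau_\ell)\ge 0$, monotonicity of $g$, and $(I+\tau_\ell C)\ge 0$ applied to $w_{k+1}-w_k$. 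Only for variable $\tau_k$ does the paper propagate $Cv_k+g(v_k)>0$, via the identity $Cv_k+g(v_k)=S(\tau_{k-1})\bigl(Cv_{k-1}+g(v_{k-1})\bigr)+\bigl(g(v_k)-g(v_{k-1})\bigr)+O(\tau_{k-1}^2)$.

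Your unified induction on $v_k'>0$ handles both cases at once, and your explicit identities $S(\tau)-I=\tau M(\tau)C$ and $S(\tau)+I=2M(\tau)+\tfrac{\tau^2}{2}M(\tau)PR$ make the structure of the increment transparent. The trade-off is that the paper's constant-$\tau$ argument needs no re-verification of $v_k'>0$ at each step---monotonicity of the increment is inherited directly through $S(\tau)$---whereas your route carries the extra bookkeeping of closing $v_{k+1}'>0$. Both arguments ultimately rely on the same ``$\tau_k$ sufficiently small'' hypothesis to absorb the sign-indefinite higher-order terms (your $\tfrac{\tau_k^3}{4}M(\tau_k)PR\,g(v_k)$; the paper's $O(\tau_\ell^2)$ remainders), so neither is more rigorous on that point.
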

\begin{proof}
Consider a uniform temporal step $\tau_{\ell}.$ From (\ref{NumericalScheme:FullyDiscretized}) we have,
\begin{eqnarray*}
v_{\ell+1}-v_{\ell} &=& S(\tau_{\ell})\l(v_{\ell} + \frac{\tau_{\ell}}{2} g(v_{\ell})\r)+\frac{\tau_{\ell}}{2} g(w_{\ell+1})-v_{\ell}, \\
&=& \tau_{\ell} (I - \frac{\tau_{\ell}}{2} R)^{-1}(I - \frac{\tau_{\ell}}{2} P)^{-1} p_{\ell},
\end{eqnarray*}
where
\begin{eqnarray*}
p_{\ell}&=& C v_{\ell} + \frac{1}{2}\l( (I + \frac{\tau_{\ell}}{2} P)(I + \frac{\tau_{\ell}}{2} R)g(v_{\ell}) + (I -\frac{\tau_{\ell}}{2} P)(I - \frac{\tau_{\ell}}{2} R)g(w_{\ell+1})\r) \\
&=& C v_{\ell} + g(v_{\ell}) + \tau_{\ell}\l( \frac{1}{4} g(v_{\ell}) + \frac{1}{2}(I - \frac{\tau_{\ell}}{2} P)(I - \frac{\tau_{\ell}}{2} R)J_{\ell} q(v_{\ell})\r).
\end{eqnarray*}
Given $C v_{\ell} + g(v_{\ell}) > 0$ and provided that $\tau_{\ell}$ is sufficiently small
then $p_{\ell}>0.$  Hence, as the matrices $(I - \frac{\tau_{\ell}}{2} R)$ and $(I - \frac{\tau_{\ell}}{2} P)$ are inverse positive then $v_{\ell+1} - v_{\ell} >0.$  Therefore the sequence increases in the first temporal step.

Next, let $v_{k+1}$ for $k>\ell$ be the numerical solution stemming from a uniform $\tau_k\equiv \tau_{\ell}.$  Assume $v_k - v_{k-1} > 0.$ Consider the difference
\begin{eqnarray*}
v_{k+1}-v_{k} &=& S(\tau_{\ell})\l( v_k - v_{k-1} + \frac{\tau_{\ell}}{2}(g(v_k)-g(v_{k-1}))\r) + \frac{\tau_{\ell}}{2}(g(w_{k+1})-g(w_{k}))
\end{eqnarray*}
Now,
\begin{eqnarray*}
w_{k+1}-w_k &=& (v_k - v_{k-1}) + \tau_{\ell} (q_k - q_{k-1}) \\
&=& (I+\tau_{\ell}C)(v_k - v_{k-1}) + \tau_{\ell} (g_k - g_{k-1}) + \bigo{\tau_{\ell}^2}
\end{eqnarray*}
is greater than zero as $g$ strictly increases and the matrix $(I+\tau_{\ell}C)$ is assumed positive.  The second order terms can also be shown to be positive, however, it is not considered here as the error in the method is second order. Hence $v_{k+1}-v_{k}>0.$ This completes the proof for a uniform temporal step by mathematical induction.

Finally, consider a nonuniform $\tau_k$ for $k>\ell.$  Assume $v_k - v_{k-1}>0$ and examine the difference
\begin{eqnarray*}
v_{k+1}-v_{k} &=& \tau_{k} (I - \frac{\tau_{k}}{2} R)^{-1}(I - \frac{\tau_k}{2} P)^{-1} (C v_k + g(v_k)) + \bigo{\tau_k^2}.
\end{eqnarray*}
It is necessary to show that $C v_k + g(v_k)>0,$ which is accomplished in the following manner,
\begin{eqnarray*}
C v_k + g(v_k) &=& C v_{k-1} + g(v_{k-1}) + g(v_k)-g(v_{k-1}) + C(v_k - v_{k-1})\\
&=& C v_{k-1} + g(v_{k-1}) + g(v_k)-g(v_{k-1}) \\
& & + C\l( \tau_{k-1}(I - \frac{\tau_{k-1}}{2} R)^{-1}(I - \frac{\tau_k-1}{2} P)^{-1}\right.\\
& & \left.\times  (C v_{k-1} + g(v_{k-1})) + \bigo{\tau_{k-1}^2}\r)\\
&=& S(\tau_{k-1}) (C v_{k-1} + g(v_{k-1})) + g(v_k)-g(v_{k-1}) + \bigo{\tau_{k-1}^2}.
\end{eqnarray*}
This is greater than zero provided $Cv_{k-1}+g(v_{k-1}>0.$
Therefore $v_{k+1}-v_k>0$ for both $\tau_k$ and $\tau_{k-1}.$ The proof is completed by mathematical induction.
\end{proof}

The above theorem is applicable for a large swath of numerical approximations to reaction diffusion equations of the form (\ref{Introduction:GeneralEq1})-(\ref{Introduction:GeneralEq2}) as long as the source term is positive and monotonically increasing. Hence, a singular nature to the source term is not a necessary requirement and hence the applicability to blow-up problems is obvious.  As such, this theorem suggests that upon formulating the semidiscretized equations, in order to guarantee monotonically increasing numerical solution one merely needs to investigate the positivity of the underlying Pad\'{e} approximate and ensure that $C v(t_0) + g(v(t_0))>0$.

Of course this latter condition is intimately connected to the initial the size of the source function as compared to the matrix $C.$ At first blush it may seem that any arbitrary initial condition $v_{\ell}$ will satisfy $C v_{\ell} + g(v_{\ell})>0.$  Unfortunately, since the nonzero entries of $C$ scale on the square inverse of the minimum spatial sizes then, indeed, $C v_{\ell}$ may be quite large.  Then again, if $v_{\ell}=0$ then this necessary condition is initially satisfied and the above theorem guarantees that the numerical solution will monotonically increase.  Naturally, this condition was anticipated on further inspection of (\ref{GovEqs:MOL}), for which guarantees that $v'(t)>0$ initially.  In either case, for a nonzero initial condition care needs to be taken to ensure this unavoidable condition is satisfied, upon doing so the increasing sequence is guaranteed provided the temporal step is sufficiently small.

The attention is now turned toward the particular matrices involved in the computation in order to verify that the positivity requirement of Theorem 4.1 is satisfied.

\begin{theorem}\label{PnegRneginvpos}
The matrices
\[ (I - \frac{\tau_k}{2}P), ~~(I - \frac{\tau_k}{2}R), \]
are invertible and inverse positive.
\end{theorem}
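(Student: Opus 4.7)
The plan is to show that both matrices are M-matrices, which immediately delivers invertibility together with a nonnegative inverse. Concretely, I would write $I - (\tau_k/2)P = \tilde{D} - E$ with $\tilde{D}$ the (positive) diagonal part and $E \geq 0$ the off-diagonal part (with the sign flipped), and then certify $\rho(\tilde{D}^{-1}E)<1$. Once this is in hand, the Neumann series $\sum_{k\geq 0}(\tilde{D}^{-1}E)^k$ converges to a nonnegative matrix, so that $(I-(\tau_k/2)P)^{-1} = \bigl(\sum_{k\geq 0}(\tilde{D}^{-1}E)^k\bigr)\tilde{D}^{-1} \geq 0$. The same argument applied to $R$ closes the theorem.

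The first step is a sign audit. From the nonuniform stencil, $T_\mu$ has strictly negative diagonal entries $-2/(\Delta\mu_i\Delta\mu_{i-1})$ and strictly positive off-diagonals, while the auxiliary $C_\mu\otimes\hat{I}_N$ contributes only nonnegative entries (the single weight $\kappa_3>0$ sitting in the column selected by the symmetry reflection). Left-multiplying by the positive diagonal $B$ preserves signs, so $P$ has negative diagonal and nonnegative off-diagonals, and therefore $I-(\tau_k/2)P$ is a Z-matrix with strictly positive diagonal. The same analysis handles $R$, since $T_\theta$ has the identical Laplacian-stencil structure and $C_\theta\otimes I_N$ contributes only the nonnegative periodic-wrap weights $\kappa_1,\kappa_2>0$.

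The main obstacle is the diagonal-dominance certificate, namely establishing $P\mathbf{1}\leq 0$ and $R\mathbf{1}\leq 0$ in the presence of the nonstandard boundary terms. I would evaluate the row sums case by case. Interior $\mu$-rows ($1<i<N$) have row sum zero because the centered difference annihilates constants. At the outer Dirichlet wall $i=N$, the missing coefficient of $u_{N+1,j}=0$ makes the row sum strictly negative. At the symmetry wall $i=1$, the $T_\mu$ part alone is short by exactly $\kappa_3$, which is precisely the entry that $C_\mu\otimes\hat{I}_N$ restores in the mirrored column, so that row sum is zero again. For $R$, the periodicity in $\theta$ carried by $C_\theta\otimes I_N$ makes every row sum exactly zero. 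In every row, therefore, $\sum_{j\neq i}P_{ij}\leq|P_{ii}|$, with the analogous bound for $R$; the delicate bookkeeping here is what the auxiliary matrices $C_\mu\otimes\hat{I}_N$ and $C_\theta\otimes I_N$ were introduced to make clean.

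Combining the sign pattern with the row-sum bound, each row of $\tilde{D}^{-1}E$ sums to at most $(\tau_k/2)|P_{ii}|/(1+(\tau_k/2)|P_{ii}|)<1$, so $\|\tilde{D}^{-1}E\|_\infty<1$ and in particular $\rho(\tilde{D}^{-1}E)<1$. This bound is uniform in $\tau_k>0$, so no smallness restriction on the temporal step is needed for this theorem, and the Neumann-series argument above then completes the proof for $P$; the matrix $R$ is treated identically. The argument simultaneously supplies invertibility (the Neumann series gives an explicit inverse) and inverse positivity (every partial sum is a product of nonnegative matrices).
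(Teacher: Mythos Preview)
Your proposal is correct and follows essentially the same route as the paper: verify the Z-matrix sign pattern, establish (weak) diagonal dominance of $P$ and $R$ via the row-sum bookkeeping (which the added identity then makes strict for $I-(\tau_k/2)P$ and $I-(\tau_k/2)R$), and conclude inverse positivity. The only difference is packaging---the paper simply asserts strict diagonal dominance and cites the $M$-matrix literature \cite{Poole_1974}, while you unpack the implication via the Neumann series $\sum_{k\geq 0}(\tilde D^{-1}E)^k$; both arguments are equivalent and yield the same $\tau_k$-free conclusion.
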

\begin{proof}
It is simple to verify that both matrices are strictly diagonally dominant. Hence invertibility is guaranteed.  In addition, the main diagonal is positive while all off-diagonal entries are less than or equal to zero.  Therefore both matrices are $M-$matrices and are indeed inverse positive \cite{Poole_1974}.
\end{proof}

Remarkably, no restriction on the temporal or spatial sizes are needed to guaranteed the above result.  The positivity of remaining matrices, however, do restrict the discretization sizes.

\begin{theorem}\label{PposRposthm}
If
\[ \tau_k < \min_{i,j} \{\Delta \mu_i \Delta \mu_{i-1}, \Delta \theta_i \Delta \theta_{i-1} \} \psi_{ii}^{-1}. \]
then matrices
\[ (I+\frac{\tau_k}{2}P), ~~ (I+\frac{\tau_k}{2}R),\]
are nonnegative. Furthermore, if $\tau_k < 1/4 \min_{i,j} \{\Delta \mu_i \Delta \mu_{i-1}, \Delta \theta_i \Delta \theta_{i-1} \} \psi_{ii}^{-1}$ then $(I+\tau_k C)$ is nonnegative.
\end{theorem}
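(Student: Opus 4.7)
The plan is to examine $P$, $R$, and $C = P + R$ entry by entry, showing that their off-diagonal entries are automatically nonnegative and that the stated bounds on $\tau_k$ are exactly what is needed to keep the main-diagonal entries of $I + \frac{\tau_k}{2}P$, $I + \frac{\tau_k}{2}R$, and $I + \tau_k C$ nonnegative as well.

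First, I would verify the sign structure off the diagonal. The nonuniform stencils for $T_\mu$ and $T_\theta$ produce tridiagonals whose sub- and super-diagonal entries are the strictly positive coefficients $\frac{2}{\Delta\mu_{i-1}(\Delta\mu_i+\Delta\mu_{i-1})}$, $\frac{2}{\Delta\mu_i(\Delta\mu_i+\Delta\mu_{i-1})}$, and their $\theta$-analogues. The symmetry/periodicity coupling matrices $C_\mu \otimes \hat{I}_N$ and $C_\theta \otimes I_N$ contribute only $0$'s or the positive constants $\kappa_1$, $\kappa_2$, $\kappa_3$. Since $B = \mbox{diag}(\psi_{i,j})$ with $\psi_{i,j}>0$, left-multiplication by $B$ preserves signs, so every off-diagonal entry of $P$ and $R$, and hence of $C$, is nonnegative. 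Consequently all off-diagonal entries of the three matrices in question are already nonnegative for any $\tau_k>0$.

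Second, I would examine the main diagonal. The $(i,j)$ diagonal entry of $T_\mu$ is $-\frac{2}{\Delta\mu_i \Delta\mu_{i-1}}$ and similarly for $T_\theta$. Inspecting $C_\mu \otimes \hat{I}_N$ shows its only diagonal contribution sits at $(i=1,\,j=0)$ and equals $+\kappa_3\geq 0$; $C_\theta \otimes I_N$ contributes nothing to the diagonal. Hence the most negative any diagonal entry of $P$ can be is $-\frac{2\psi_{i,j}}{\Delta\mu_i\Delta\mu_{i-1}}$, and the most negative diagonal of $R$ is $-\frac{2\psi_{i,j}}{\Delta\theta_j\Delta\theta_{j-1}}$. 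Imposing $1 + \frac{\tau_k}{2}\l(-\frac{2\psi_{i,j}}{\Delta\mu_i\Delta\mu_{i-1}}\r) \geq 0$ uniformly in $(i,j)$ gives exactly $\tau_k \leq \Delta\mu_i\Delta\mu_{i-1}/\psi_{i,j}$, and the analogous constraint for $R$ is $\tau_k \leq \Delta\theta_j\Delta\theta_{j-1}/\psi_{i,j}$. Taking the minimum over indices and over both directions is precisely the first hypothesis, so $I+\frac{\tau_k}{2}P$ and $I+\frac{\tau_k}{2}R$ are nonnegative.

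Third, for $I + \tau_k C = I + \tau_k(P+R)$ the worst-case diagonal is $1 - 2\tau_k\psi_{i,j}\l(\frac{1}{\Delta\mu_i\Delta\mu_{i-1}} + \frac{1}{\Delta\theta_j\Delta\theta_{j-1}}\r)$. Using the elementary bound $\frac{1}{a}+\frac{1}{b} \leq \frac{2}{\min(a,b)}$, I would reduce to the sufficient condition $1 - \frac{4\tau_k\psi_{i,j}}{\min(\Delta\mu_i\Delta\mu_{i-1},\,\Delta\theta_j\Delta\theta_{j-1})} \geq 0$, which rearranges to the second stated bound. The only delicate point, and therefore the main obstacle, is the bookkeeping for $C_\mu \otimes \hat{I}_N$ and $C_\theta \otimes I_N$: one must check carefully that each of their nonzero entries either lands off the diagonal or, when on the diagonal, carries a positive sign, so that they can only improve nonnegativity rather than enlarge the worst-case constant that drives the restriction on $\tau_k$.
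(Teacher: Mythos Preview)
Your proposal is correct and follows essentially the same route as the paper: observe that the off-diagonal entries of $P$ and $R$ are nonnegative by construction, so only the main diagonal needs control, and then bound the worst-case diagonal entry to obtain the stated $\tau_k$ restriction. Your treatment of $I+\tau_k C$ via the inequality $\tfrac{1}{a}+\tfrac{1}{b}\le \tfrac{2}{\min(a,b)}$ is in fact more explicit than the paper, which simply asserts that ``a comparable argument holds''; one minor caveat is that, depending on the parity of $M$, $C_\mu\otimes\hat I_N$ can place a second positive entry on the diagonal besides the $(i{=}1,j{=}0)$ one, but as you already note, any such contribution is nonnegative and therefore harmless to the estimate.
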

\begin{proof}
Consider the matrix $(I+\frac{\tau_k}{2}P).$  The matrices $B$ and $C_{\mu}$ are positive and the off diagonals of $T_{\mu}>0.$ Consequently, only the main diagonal is of any concern.  Consider the main diagonal, that is,
\begin{eqnarray*}
(I+\frac{\tau_k}{2}P)_{ii} &=& 1 - \frac{\tau_k}{\Delta \mu_i \Delta \mu_{i-1}} \psi_{ii} + \delta_{1i} \frac{\tau_k}{2}\kappa_3 \psi_{1i}, \\
&>&  1 - \frac{\tau_k}{\Delta \mu_i \Delta \mu_{i-1}} \psi_{ii}.
\end{eqnarray*}
Given the assumption on $\tau_k$ this is clearly greater than zero.

Similarly, only the main diagonal entries of $(I+\frac{\tau_k}{2}R)$ are of concern.  Again, with the assumption on $\tau_k$ the result is guaranteed.  A comparable argument holds for the matrix $I+\tau_k C.$
\end{proof}

The above results combined with the satisfied criterion on the temporal size the positivity of the involved matrices is guaranteed.  Therefore by Theorem \ref{GeneralMonotonicThm} the numerical solution will monotonically increase toward unity or till steady state is achieved.

The attention is now turned toward linear stability of the numerical algorithm, that is, showing that any perturbation to the numerical solution is bounded. This amounts to freezing the nonlinear term and investigating the amplification matrix.  Although this is a current limitation to the numerical analysis it has seen reasonable stability results in numerical experiments, in particular in \cite{Beauregard_2012MIT} and the weakly nonlinear analysis presented in \cite{Liang_2006}. In the forthcoming numerical experiments an investigation into the nonlinear stability is offered.  The vector $\infty-$norm us used throughout unless indicated otherwise, defined as,
$$ \| v \|_{\infty} = \max_i |v_i|,$$
for a vector $v$.  For matrices, the matrix $\infty-$norm is also used, that is, for a $n\times n$ matrix $A$,
$$ \| A \|_{\infty} = \max_{i=1,\ldots,n} \sum_{j=1}^n |a_{ij}|.$$

The following lemmas are a staple to the final theorem.

\begin{lemma}
If
\[ \tau_k < \min_{i,j} \{\Delta \mu_i \Delta \mu_{i-1}, \Delta \theta_i \Delta \theta_{i-1} \} \psi_{ii}^{-1} \]
then
\[ \| (I+\frac{\tau_k}{2}P)(I+\frac{\tau_k}{2}R) \| = 1.\]
\end{lemma}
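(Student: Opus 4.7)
The plan is to exploit the identity $\|A\|_\infty=\|A\mathbf{1}\|_\infty$, valid for any entrywise nonnegative matrix $A$, where $\mathbf{1}$ denotes the all-ones vector. Under the stated bound on $\tau_k$, Theorem \ref{PposRposthm} guarantees that both $I+\frac{\tau_k}{2}P$ and $I+\frac{\tau_k}{2}R$ are nonnegative, and therefore so is their product; it then suffices to compute the maximum entry of $(I+\frac{\tau_k}{2}P)(I+\frac{\tau_k}{2}R)\mathbf{1}$.

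The first step is to verify that $R\mathbf{1}=0$. The periodic boundary conditions in $\theta$ are encoded by $C_\theta$, whose nonzero entries $\kappa_1$ and $\kappa_2$ are precisely the centered-difference stencil weights that would otherwise be lost at the rows $j=0$ and $j=M$ of $T_\theta$. A short summation of the three stencil weights in each row shows that every row of $T_\theta+C_\theta$ has zero sum, and the diagonal multiplication by $B$ preserves this, so $(I+\frac{\tau_k}{2}R)\mathbf{1}=\mathbf{1}$.

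The matrix $P$ takes a little more bookkeeping because of its mixed boundary treatment in $\mu$. For every interior row $1<i<N$, the $i$-th row of $T_\mu$ is an unmodified centered-difference stencil whose three weights sum to zero. For $i=1$, the row of $T_\mu$ has row sum $-\kappa_3$, and the block $C_\mu\otimes\hat I_N$ contributes exactly $+\kappa_3$ in the column indexed by $(1,M+1-j)$; this is precisely the purpose of the symmetry trick $u_{0,j}=u_{1,M+1-j}$, and the defect is cancelled. The only rows that remain defective are those with $i=N$, where the Dirichlet condition $u_{N+1,j}=0$ deletes the rightmost stencil weight, leaving a strictly negative row sum $-2\psi_{N,j}/[\Delta\mu_N(\Delta\mu_N+\Delta\mu_{N-1})]$. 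Hence $(I+\frac{\tau_k}{2}P)\mathbf{1}$ has all entries in $[0,1]$, with equality to one on every row with $i<N$.

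Combining the two computations,
\[
(I+\tfrac{\tau_k}{2}P)(I+\tfrac{\tau_k}{2}R)\mathbf{1}=(I+\tfrac{\tau_k}{2}P)\mathbf{1},
\]
whose maximum entry is $1$, which equals the induced matrix $\infty$-norm by nonnegativity of the product. The main obstacle is the row-sum bookkeeping for $P$: one must keep track simultaneously of the nonuniform spacings, the symmetry-wrap through $C_\mu\otimes\hat I_N$, and the one-sided Dirichlet condition at $i=N$, so as to certify that the $i=N$ rows are the only source of a defect. Once this algebraic identity is verified, the lemma follows immediately.
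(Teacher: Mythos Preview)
Your proposal is correct and follows essentially the same approach as the paper: both arguments hinge on the nonnegativity of the two factors (from Theorem~\ref{PposRposthm}) and on computing their row sums, showing that the periodic $\theta$-stencil in $R$ has zero row sums while the $\mu$-stencil in $P$ has zero row sums except at the Dirichlet rows $i=N$, where the row sum is strictly between $0$ and $1$. Your packaging via the identity $\|A\|_\infty=\|A\mathbf{1}\|_\infty$ for nonnegative $A$ is slightly cleaner than the paper's two-step argument (first $\|\cdot\|\le 1$ by submultiplicativity, then equality by exhibiting a row), but the substance is the same.
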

\begin{proof}
Define
\[ \beta_i = 1 + \frac{\tau_k}{2}\sum_{j=1}^{N(M+1)} P_{ij} \mbox{ and } \alpha_i = 1 + \frac{\tau_k}{2}\sum_{j=1}^{N(M+1)}R_{ij},\]
that is, the row sums of the involved matrices in the product. Of course, by Theorem \ref{PposRposthm} both matrices $I+\frac{\tau_k}{2}P$ and $I+\frac{\tau_k}{2}R$ are nonnegative and, therefore, $\beta_i$ and $\alpha_i>0.$  Additionally,
\[ \| (I+\frac{\tau_k}{2}P) \| = \max_i \beta_i \mbox{  and  } \|(I+\frac{\tau_k}{2}R) \| =\max_i \alpha_i .\]
Obviously, as a result of the periodic boundary conditions $\max_i \alpha_i = 1.$ Similarly, $\beta_i=1$ for $i\neq kN$ where $k=1,\ldots, M+1.$ In other words, it is one on all rows outside of where the Dirichlet boundary conditions are implemented.  Now,
\begin{eqnarray*}
0 < \beta_{kN} = 1 -  \frac{\tau_k}{2}B_{kN}\left(\frac{1}{\Delta \mu_{N-1}(\Delta \mu_{N-1}+\Delta \mu_{N})}\right) < 1.
\end{eqnarray*}
Therefore $\max_i \beta_i = 1.$  Consequently,
\[ \| (I+\frac{\tau_k}{2}P)(I+\frac{\tau_k}{2}R) \| \leq 1.\]
Equality is obtained by showing that a particular row sums to one.  For $i\neq kN,$
\begin{eqnarray*}
\sum_{j=1}^{N(M+1)} \left((I+\frac{\tau_k}{2}P)(I+\frac{\tau_k}{2}R)\right)_{ij} &=& \sum_{j=1}^{N(M+1)} \sum_{k=1}^{N(M+1)} \left((I+\frac{\tau_k}{2}P)\right)_{ik} \left((I+\frac{\tau_k}{2}R)\right)_{kj} \\
&=& \sum_{k=1}^{N(M+1)} \sum_{j=1}^{N(M+1)} \left((I+\frac{\tau_k}{2}P)\right)_{ik} \left((I+\frac{\tau_k}{2}R)\right)_{kj}\\
&=&\sum_{k=1}^{N(M+1)} \left((I+\frac{\tau_k}{2}P)\right)_{ik} = 1.
\end{eqnarray*}
Thus, the theorem is clear.
\end{proof}

The above lemma, relies on established positivity of the involved matrices as well as the Pad\'{e} approximation used.  Notably, this proof can be used as a cleaner alternative to the results shown in \cite{Beauregard_2011} for rectangular physical domains.

\begin{lemma}
\[ \| (I-\frac{\tau_k}{2}R)^{-1}(I-\frac{\tau_k}{2}R)^{-1} \| \leq 1.\]
\end{lemma}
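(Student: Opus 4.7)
The plan is to exploit submultiplicativity of the infinity norm, writing
\[\|(I - \tfrac{\tau_k}{2}R)^{-1}(I - \tfrac{\tau_k}{2}R)^{-1}\|_\infty \leq \|(I - \tfrac{\tau_k}{2}R)^{-1}\|_\infty^{2},\]
so that everything reduces to proving $\|(I - \tfrac{\tau_k}{2}R)^{-1}\|_\infty \leq 1$. Since both factors in the stated product are identical copies of the same matrix, no mixed-factor analysis is required; the estimate collapses to bounding a single inverse by one.

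To obtain that single bound I will use the inverse positivity of $I - \tfrac{\tau_k}{2}R$ already established in Theorem \ref{PnegRneginvpos}. For any nonnegative matrix $M$, $\|M\|_\infty = \max_i \sum_j M_{ij} = \|M \mathbf{1}\|_\infty$, where $\mathbf{1}$ denotes the all-ones vector, so it suffices to identify $(I - \tfrac{\tau_k}{2}R)^{-1} \mathbf{1}$ componentwise. The cleanest route is to verify $R\mathbf{1} = \mathbf{0}$; this yields $(I - \tfrac{\tau_k}{2}R)\mathbf{1} = \mathbf{1}$, and therefore $(I - \tfrac{\tau_k}{2}R)^{-1}\mathbf{1} = \mathbf{1}$, giving $\|(I - \tfrac{\tau_k}{2}R)^{-1}\|_\infty = 1$ exactly.

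Checking $R\mathbf{1} = \mathbf{0}$ reduces to verifying that $T_\theta + C_\theta$ has vanishing row sums, since $R = B(T_\theta \otimes I_N + C_\theta \otimes I_N)$ and $B$ is a positive diagonal matrix whose entries factor out of each row sum. For the interior $\theta$-rows the three nonuniform centered-difference coefficients telescope,
\[\tfrac{2}{\Delta\theta_{j-1}(\Delta\theta_j + \Delta\theta_{j-1})} - \tfrac{2}{\Delta\theta_j \Delta\theta_{j-1}} + \tfrac{2}{\Delta\theta_j(\Delta\theta_j + \Delta\theta_{j-1})} = 0,\]
so the row sum vanishes. For the two boundary rows $j = 0$ and $j = M$, the periodic identifications $u_{i,-1} = u_{i,M}$ and $u_{i,M+1} = u_{i,0}$ relocate one stencil entry across the row, and the correction coefficients $\kappa_1$ and $\kappa_2$ placed by $C_\theta$ are precisely the stencil weights needed to complete the same telescoping cancellation. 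This is essentially the calculation appearing in the preceding lemma to obtain $\max_i \alpha_i = 1$.

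The main obstacle is the bookkeeping at the two boundary rows: one must confirm that the values assigned to $\kappa_1$ and $\kappa_2$ match exactly what the centered-difference stencil at $j=0$ and $j=M$ produces after the periodic identifications are applied, rather than some mismatched value that would leave a nonzero residual in the row sum and spoil the identity $(I - \tfrac{\tau_k}{2}R)\mathbf{1} = \mathbf{1}$. Once that is confirmed, submultiplicativity closes the argument. Notably no restriction on $\tau_k$ or on the grid spacings enters, consistent with the remark immediately following Theorem \ref{PnegRneginvpos}.
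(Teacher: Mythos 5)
Your argument is correct, and it takes a genuinely different route from the paper's. The paper bounds each factor of the product by applying the Varah bound for strictly diagonally dominant matrices: it computes the dominance margins $\alpha_i = 1 - \frac{\tau_k}{2}\sum_j R_{ij}$ (and $\beta_i$ for $P$), observes that they are bounded below by $1$, and concludes $\l\| \l(I-\frac{\tau_k}{2}R\r)^{-1} \r\| \leq 1$ directly from that estimate. You instead use the inverse positivity already established in Theorem~\ref{PnegRneginvpos} together with the exact identity $R\mathbf{1}=\mathbf{0}$, so that $\l(I-\frac{\tau_k}{2}R\r)\mathbf{1}=\mathbf{1}$ and hence $\l(I-\frac{\tau_k}{2}R\r)^{-1}\mathbf{1}=\mathbf{1}$, which for a nonnegative matrix gives the norm exactly. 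Your route is more self-contained (no external bound needs to be cited) and even sharpens the inequality to an equality for the $R$ factor; the paper's route needs only diagonal dominance, not inverse positivity. One caution: the displayed statement repeats the $R$ factor, but the paper's own proof and the subsequent stability theorem make clear the intended product is $\l(I-\frac{\tau_k}{2}R\r)^{-1}\l(I-\frac{\tau_k}{2}P\r)^{-1}$. Your method still covers the $P$ factor, but not by the identical argument, since $P\mathbf{1}\neq\mathbf{0}$: the Dirichlet rows at $i=N$ lose the $u_{N+1,j}$ stencil weight, so $P\mathbf{1}\leq\mathbf{0}$ with strict inequality there. You then get $\l(I-\frac{\tau_k}{2}P\r)\mathbf{1}\geq\mathbf{1}$ componentwise, and nonnegativity of the inverse yields $\l(I-\frac{\tau_k}{2}P\r)^{-1}\mathbf{1}\leq\mathbf{1}$, hence norm at most $1$; with that one extra step your submultiplicativity argument closes the intended claim as well.
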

\begin{proof}
By Lemma \ref{PnegRneginvpos} the inverses exist. Define
\[ \beta_i = 1 -\frac{\tau_k}{2}\sum_{j=1}^{N(M+1)} P_{ij} \mbox{ and } \alpha_i = 1 - \frac{\tau_k}{2}\sum_{j=1}^{N(M+1)}R_{ij}.\]
As with the previous lemma $\beta_i$ and $\alpha_i$ are one for $i\neq kN.$ In fact, $\alpha_i = 1$ as the individual rom sums of $R$ is zero. Consider,
\begin{eqnarray*}
\alpha_{kN} = 1 + \frac{\tau_k}{2}B_{kN} \left(\frac{1}{\Delta \mu_{N-1}(\Delta \mu_{N-1}+\Delta \mu_{N})}\right) > 1.
\end{eqnarray*}
Thus the $\inf_i \alpha_i = 1.$ Utilizing the Varah bound the theorem is clear \cite{Varah_1975}.
\end{proof}

The combined result of the previous two lemmas enables one to bound the amplification matrix $S(\tau_k).$ This is stated concisely in the following theorem.

\begin{theorem}
If
\[ \tau_k < \min_{i,j} \{\Delta \mu_i \Delta \mu_{i-1}, \Delta \theta_i \Delta \theta_{i-1} \} \psi_{ii}^{-1}. \]
then the adaptive splitting scheme (\ref{NumericalScheme:FullyDiscretized}) with \textbf{fixed nonuniform} grids and the nonlinear term frozen is weakly stable in the von Neumann sense under the $\ell_{\infty}-$norm. That is,
\[ \| z_{k+1} \| \leq \|z_m\|, \]
where $z_m = v_k - \tilde{v}_k$ is a perturbation or error at time step $m$ and $z_{k+1}=v_{k+1}-\tilde{v}_{k+1}$ is the perturbation or error arising after taking $k+1-m$ steps.
\end{theorem}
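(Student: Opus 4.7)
My plan is to reduce the statement to a bound on the amplification operator $S(\tau_k)$ and then invoke the two preceding lemmas. First I would freeze the nonlinear term by replacing $g(v)$ in (\ref{NumericalScheme:FullyDiscretized}) by a constant vector $\bar g$, so that the update reduces to $v_{k+1} = S(\tau_k) v_k + c_k$, with forcing $c_k := \frac{\tau_k}{2}\bigl(\bar g + S(\tau_k)\bar g\bigr)$ depending only on $\tau_k$ and $\bar g$ and not on the state. Running the same recursion for a perturbed trajectory $\tilde v_k$ and subtracting, the forcing $c_k$ cancels and the error $z_k := v_k - \tilde v_k$ obeys the purely linear recurrence $z_{k+1} = S(\tau_k)\,z_k$ for every $k\geq m$.

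Next I would estimate $\|S(\tau_k)\|_\infty$. By (\ref{GovEqs:PRsplittingoperator}) the operator factors as $S(\tau_k) = A^{-1}B^{-1}CD$ with $A = I - \frac{\tau_k}{2}R$, $B = I - \frac{\tau_k}{2}P$, $C = I + \frac{\tau_k}{2}P$, and $D = I + \frac{\tau_k}{2}R$. Grouping the factors as $(A^{-1}B^{-1})(CD)$ matches precisely the two lemmas immediately above, and submultiplicativity of the $\ell_\infty$ operator norm gives
\[ \|S(\tau_k)\|_\infty \;\leq\; \|A^{-1}B^{-1}\|_\infty\,\|CD\|_\infty \;\leq\; 1\cdot 1 \;=\; 1, \]
where the first factor is controlled by the inverse-positive/row-sum lemma and the second equals one by the identity for $\|(I+\frac{\tau_k}{2}P)(I+\frac{\tau_k}{2}R)\|_\infty$ established just above. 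The stated bound on $\tau_k$ is exactly the hypothesis that permits both lemmas to apply.

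Finally I would iterate. Because the spatial grid is held fixed, the matrices $P$ and $R$ do not change from step to step, and provided every adaptive temporal step $\tau_k$ for $k\geq m$ respects the stated threshold, the bound $\|S(\tau_k)\|_\infty \leq 1$ is available at each step. Telescoping the linear recurrence yields
\[ \|z_{k+1}\|_\infty \;\leq\; \prod_{j=m}^{k} \|S(\tau_j)\|_\infty\;\|z_m\|_\infty \;\leq\; \|z_m\|_\infty, \]
which is precisely the claimed weak stability in the von Neumann sense.

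I do not expect a serious obstacle once the two preceding lemmas are in hand; the only points requiring care are the bookkeeping of which lemma controls which product of factors in $S(\tau_k)$, and the recognition that the freezing hypothesis is applied to the plain Peaceman--Rachford update (\ref{NumericalScheme:FullyDiscretized}) and not to the midpoint-corrected form (\ref{NumericalScheme:FullyDiscretizedMidPoint}), which would leave residual Jacobian-dependent terms and demand a more delicate argument. Verifying that the temporal-step bound holds uniformly across the adaptive sequence is automatic since the threshold depends only on the fixed spatial mesh and on the frozen coefficients $\psi_{ii}$.
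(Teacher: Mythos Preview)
Your proposal is correct and is exactly the route the paper takes: the theorem is stated without an explicit proof, as an immediate consequence of the two preceding lemmas, and your derivation---freezing $g$, obtaining the linear error recurrence $z_{k+1}=S(\tau_k)z_k$, splitting $S(\tau_k)$ into the two factors handled by those lemmas, and iterating---supplies precisely the missing details. The only minor remark is that the inverse-factor lemma holds unconditionally (no $\tau_k$ hypothesis needed there), so the step-size restriction is used only to bound $\|(I+\tfrac{\tau_k}{2}P)(I+\tfrac{\tau_k}{2}R)\|_\infty$.
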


\section{Numerical Experiments}

The numerical experiments explore the validity of the numerical algorithm in addition to providing an experimental analysis of the singular partial differential equation.  First, for a fixed nonlinear source term and the degeneracy set to unity, the numerical solution is calculated from a zero initial condition. A Monte Carlo method is then utilized to statistically examine the stability and effect of random errors of a certain size that are introduced in the late stages of the computation. In addition, a statistical analysis of the effect of random perturbations randomly introduced throughout the computation is given.  While, this is no substitute for a full nonlinear analysis of the error and stability, it provides a novel way of statistically quantifying these important characteristics and presents a new way of investigating the stability. Hence, this methodology may be employed in other nonlinear problems to better understand the effects of small perturbations.

The remaining experiments analyze the partial differential equation. First, critical quenching domains are calculated for various elliptical shapes. That is, upon fixing the ratio between the minor and major axes the area is reduced till quenching is no longer numerically observed.  The critical quenching domain's area is then stated as the smallest observed domain for which quenching persists for a particular ratio $B/A.$ Lastly, the effect of the degeneracy on the quenching location is examined.  In particular, we investigate the effect of defects on the transportation ($s(x,y)$ vanishes at a boundary point) or diffusion ($s(x,y)$ is singular at a boundary point) of heat.

For each computation the nonlinear source function considered is,

\[ f(u) = \frac{1}{1-u}.\]

The numerical scheme is implemented in Matlab\textsuperscript{\textregistered}, for which a high-level C program is then generated through Matlab Coder$\texttrademark .$ The computation is then carried out on a high performance HP C3000BL HPC cluster running CentOS V at Baylor University.  The processor consists of 128 computer nodes, each with 16 GB of RAM and dual quad-core Intel 2.6 GHz processors giving a total of 1024 cores.

~

\noindent \textit{\textbf{Experiment 1.}} Let the major and minor axis be $6$ and $4$, respectively.  Let $s(x,y)=1$, that is, there is no mathematical degeneracy.  As such, it is reasonable to expect the quenching location to occur at the origin.  The numerical solution is computed utilizing a uniform $101 \times 102$ grid. The initial temporal step is $\tau_0 = .9 \times 10^{-4}$, while $u_0(x,y)=0,$ hence the hypothesis of Theorems 5.1 and 5.3 are satisfied.  The numerical solution quenches at $t=0.861609.$ The temporal step reduces monotonically to the minimum step size of $10^{-8}$ as shown in Figure \ref{fig1adapt}.
\begin{figure}[h]
\begin{center}
\includegraphics[scale=.45]{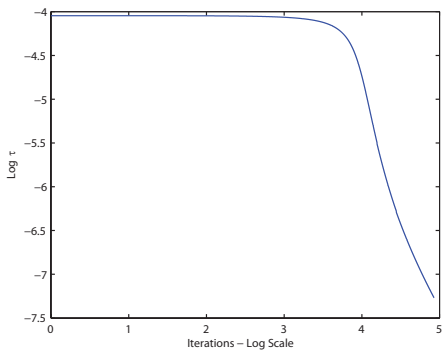}
\caption{A log-log plot of the temporal step throughout the duration of the computation.  The minimum step-size is achieved prior to quenching.  The greatest change in the temporal step occurs later in the computation as expected.}
\label{fig1adapt}
\end{center}
\end{figure}

The solution and the temporal derivative in the iteration just prior to quenching in cartesian and elliptical coordinates are shown in Figures \ref{Example1FigureNoPert}(a)-(d). As expected, x-axis symmetry is evident and the quenching location is found to occur at the origin.

\begin{figure}[t]
\begin{center}
\includegraphics[scale=.45]{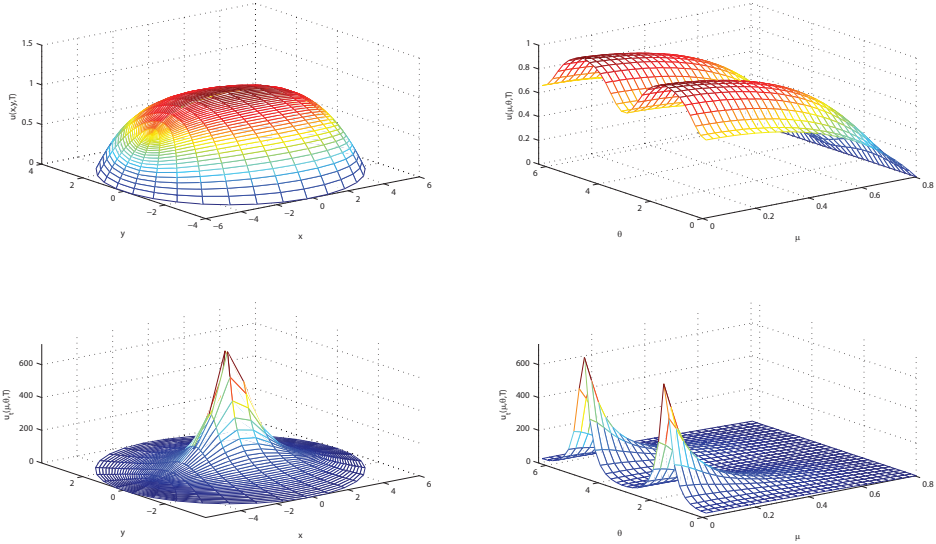}
\caption{(a-b) The numerical solution and its (c-d) temporal derivative in cartesian and elliptical coordinates. The $\max_{x,y}u_t \approx 760$. A third of the grid points are shown to minimize digital memory sizes.}
\label{Example1FigureNoPert}
\end{center}
\end{figure}

The stability analysis of the previous section guaranteed weak stability throughout the computation.  This amounts to freezing the nonlinear term and examining the amplification matrix.  Hence, near the onset of quenching the nonlinear terms dominate and, consequently, there is no such guarantee.  Here, the stability of the numerical scheme throughout the tail end of the computation is examined experimentally.  To accomplish this, a small and random perturbation of order $10^{-n}$ is introduced to the numerical solution found at the iteration for which the $\max v_k$ is first greater than or equal to $0.9.$  The scheme then proceeds with the perturbed solution, that is,
\[ v_k \rightarrow v_k + 10^{-n} z, \]
where $z$ is an order 1 random vector of appropriate size.  A Monte Carlo simulation is utilized to statistically examine the effect on the quenching time, location, and relative difference in the unperturbed $v_k$ and perturbed $\tilde{v}_k$ numerical solutions as the size of the perturbation is increased.  The results are show in the first six rows of Table \ref{Example1Table1}.

\begin{table}[h]
\begin{center}
\begin{tabular}{r|c|c|l|l}\hline
$n$ & (x,~y) & $\tilde{T}$ & $\| v_f-\tilde{v}_f \|$ & $| T-\tilde{T}| / |T|$ \\ \hline
15  & (0.000,~0.000) & 0.861609 & $9.7 \times 10^{-14}$ & 0.0 \\ 
10  & (0.000,~0.000) & 0.861609 & $3.1 \times 10^{-9 }$ & 0.0 \\ 
5   & (0.000,~0.000) & 0.861609 & $6.3 \times 10^{-5 }$ & $1.8 \times 10^{-7}$ \\
4   & (0.000,~0.000) & 0.861608 & $7.8 \times 10^{-4 }$ & $8.4 \times 10^{-7}$ \\
3   & (0.000,~0.000) & 0.861563 & $1.3 \times 10^{-3 }$ & $5.3 \times 10^{-5}$ \\
2   & (0.064,~0.000) & 0.860117 & $6.3 \times 10^{-3 }$ & $1.7 \times 10^{-3}$ \\
NA  & (0.000,~0.000) & 0.861609 & 0.0 & 0.0 \\ \hline \hline
10  & (0.000,~0.000) & 0.861609 & $2.5 \times 10^{-6 }$ & $1.2 \times 10^{-16}$ \\
15  & (0.000,~0.000) & 0.861609 & $6.9 \times 10^{-12}$ & $1.3 \times 10^{-16}$ \\ \hline
\end{tabular}
\caption{The quenching locations $(x,y)$, times $\tilde{T}$, and maximum difference between the perturbed and unperturbed solutions at the quenching time of the perturbed solution.  The relative differences between the quenching times is also given. The last two rows involve random perturbations of order $10^{-10}$ and $10^{-15}$ randomly throughout the entire computation and compares the unperturbed solution.}
\label{Example1Table1}
\end{center}
\end{table}

The scheme is quite robust and the overall effect of the perturbation is roughly of the same order as the size of the initial perturbation itself.  There is a subtle difference in the absolute difference in the computed solutions, but interestingly, there is less influence on the calculated quenching location and time.  In fact, the quenching location shows no observable difference until the order of the perturbation is $10^{-2}.$ Comparable results can be established if the perturbations are introduced earlier in the computation indicating statistical and experimental evidence for stability.

Naturally, one-time perturbations seems contrived and a best-case scenario. It is reasonable to expect errors in the floating point arithmetic that may occur throughout the entire computation.  Let's consider two simulations where perturbations of order $n=10$ and $15$ are introduced randomly throughout the computation. Again, a Monte Carlo technique is employed to statistically quantify the error effects.  The results are shown in the last two rows of \ref{Example1Table1}. Interestingly, for perturbations of order $n=15$ there is excellent agreement with the unperturbed solution, although the effect of the perturbations are now of order $10^{-12}.$  Hence, it seems reasonably to expect that simulations on a machine with 16-digit floating point precision can be established with confidence.  On the other hand, larger perturbations of order $10^{-10}$ may accumulate more and creep into the desired resolution of the computation itself.  Regardless of the size of the perturbations used the quenching location is not influenced and the quenching times see virtually no difference. Relative difference in the perturbed and unperturbed solution stays of the same order for over half of the total iterations, that is, when the nonlinearity becomes of order one.  These statistical results provide additional evidence that the method is indeed reliable and stable to perturbations.

~

\noindent \textbf{\textit{Experiment 2}}  An interesting feature of singular reaction diffusion equations of the quenching type is that for piecewise smooth domains there exists a critical quenching domain \cite{Chan_2011}.  It remains a challenging theoretical problem to go beyond existence theorems and determine bounds on the critical size of a given domain shape.  Of course the results of this paper show that this can be accomplished using knowledge from the rectangular domain case and an established combination of theoretical and numerical results are available in such cases \cite{Beauregard_2011,Chan_1989,Sheng_2003}.  Here, the critical quenching domain is calculated for various ellipses.  To accomplish this, the ratio of minor to major axes ($B/A$) is fixed. Then the area, $\pi AB$ is systematically lowered till quenching is no longer observed.  The critical quenching domain is then established as the smallest area for which numerical quenching is observed.

Without loss of generality the ellipses are centered at the origin and the major axis, $2 A$, is along the x-axis.  No degeneracy is considered, that is, $s(x,y)=1.$ Hence, as in the previous experiment the quenching location is at the origin. The calculations begin from the upper bounds offered in Table \ref{TableQuenchingBoundsTheory}.

\begin{figure}[h]
\begin{center}
\includegraphics[scale=.45]{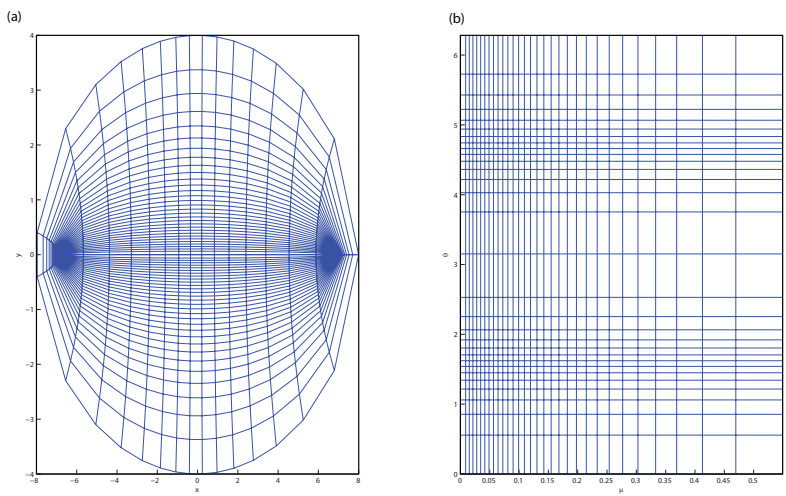}
\caption{An example of an exponentially fitted nonuniform grid in (a) cartesian and (b) elliptical coordinates developed from Experiment 1's quenching solution. A quarter of the grid points are shown to better see the evolution of the grids.}
\label{Example2FigureMesh}
\end{center}
\end{figure}

The fixed nonuniform grids are established utilizing exponential grids \cite{Beauregard_2012MIT} developed from an initial experimentation's quenching solution's temporal derivative. A one sided exponential function is fitted in the least squares sense to the quenching solution's temporal derivative along the line $\theta=\pi/2.$ That function's arc-length is then equidistributed to establish the $\mu$ grid. A two-sided exponential function is utilized in similar fashion along $\mu=0$ for $\theta\in[0,\pi].$ The equidistribution of that function establishes the $\theta$ grid in the first two quadrants.  The remaining grid locations for $\theta$ are chosen such that $\theta_j = \theta_{M+1-j}.$  This results in an exponentially fitted grid offering the finest resolution at the origin; an example is shown in Figure \ref{Example2FigureMesh}.

\begin{figure}[h]
\begin{center}
\includegraphics[scale=.45]{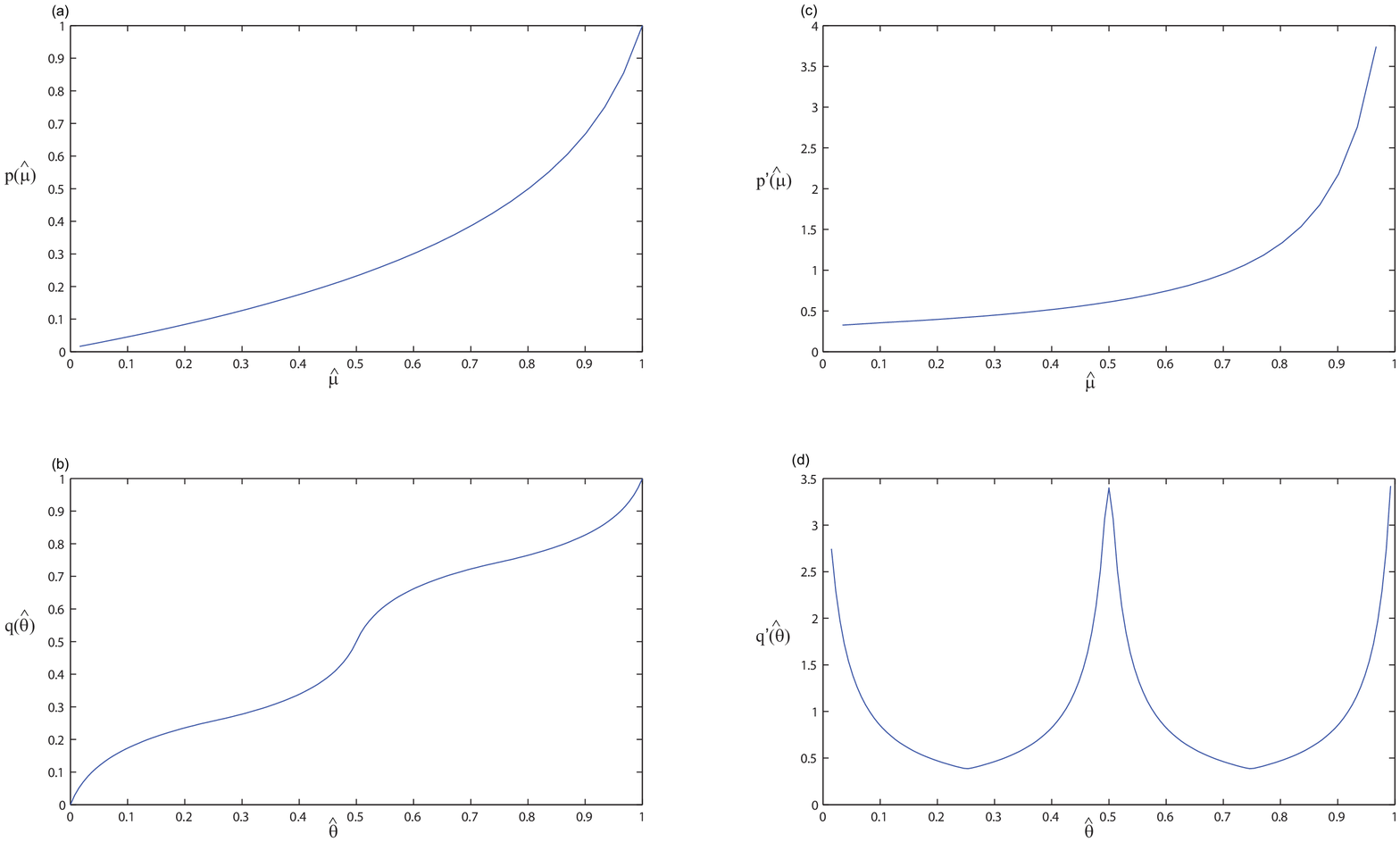}
\caption{The mapping functions (a-b) $p(\hat{\mu})$ and $q(\hat{\theta})$ and their (c-d) derivatives are shown for the mesh shown in Figure \ref{Example2FigureMesh}.}
\label{Example2FigureMeshSmooth}
\end{center}
\end{figure}

Let $p: \hat{\mu} \mapsto \mu$ be a mapping function from the logical space (uniform grid) to the physical space (nonuniform grid) in $\mu$.  Similarly, let $q: \hat{\theta} \mapsto \theta$ be corresponding mapping function for the theta domain.  It is known that the truncation error of a nonuniform central difference method is of first order, however, if the mapping function is smooth then the actual truncation error is second order \cite{Budd_2009}.  The mapping functions are examined for each formulated mesh and are observed to be smooth.  An illustration of the mapping functions and their derivatives for the meshes shown in Figures \ref{Example2FigureMeshSmooth}(a-d).

\begin{table}[h]
\begin{center}
\begin{tabular}{r|r|r|r|r}\hline
$B/A$ & Rect. Area & $T_{Rect}$ & Ellip. Area & $T_{Ellip}$  \\ \hline
.125  & 18.8054    & 44.912 & 25.2282 &  13.030 \\ \hline
.250  &  9.6722    & 47.600 & 13.2296 &  45.367 \\ \hline
.375  &  6.8501    & 48.847 &  9.6480 &  71.578 \\ \hline
.500  &  5.5986    & 49.070 &  8.0503 & 142.985 \\ \hline
.625  &  4.9679    & 49.983 &  7.2480 & 315.403 \\ \hline
.750  &  4.6453    & 32.151 &  6.8562 &  95.962 \\ \hline
.875  &  4.4964    & 29.281 &  6.7133 & 162.050 \\ \hline
\end{tabular}
\caption{The critical quenching domain area $\pi BA$ and time for ellipses of fixed ratios of minor to major axes.  The corresponding quenching domain size and time for rectangles with fixed ratio to width to length is also offered as a comparison.}
\label{Example2Table1}
\end{center}
\end{table}

~

\noindent \textbf{\textit{Experiment 3.}} The mathematical degeneracy encompasses interesting physical phenomena in addition to an intellectual interesting nuance to such singular partial differential equations.  In the case of rectangular domains with Dirichlet boundary conditions, it has been observed that the quenching location shifts toward the degeneracy, that is, the location for which $s(x,y)=0$ on the boundary \cite{Beauregard_2012MIT}.  Experimentally, it is observed that the quenching location is shifted.  Consequently, a mathematical proof has yet to be established and is recommended as a novel undertaking.

In this light, the effect of a mathematical degeneracy is examined here for two situations.  If $s(x,y)=0$ at a boundary point, then this amounts to a degeneracy in the transportation of heat, as the coefficient of $u_t$ vanishes.  In this case, as a result of Theorem 5.3 and Lemma 5.4, the maximum temporal step is more restricted.  In contrast, if $s(x,y)\rightarrow \infty$ at a boundary point, then this amount to a degeneracy in the diffusion of heat, as the coefficient of the Laplacian vanishes. In such case, the maximal temporal step is larger than without a degeneracy.  Both situations are considered here.  Let
\[ q(x,y)= \frac{\gamma}{2}\left( \frac{x^*}{a^2 \cosh^2(\zeta)}(x^*-x) + \frac{y^*}{a^2 \sinh^2(\zeta)}(y^*-y) \right),\]
where
\begin{eqnarray*}
x^*&=&a \cosh(\zeta)\cos(\theta^*), \\
y^*&=&a \sinh(\zeta)\sin(\theta^*), \\
\end{eqnarray*}
$\gamma>0$ is the maximum value of $q(x,y)$ restricted to $\Omega$, and $(x^*,y^*)$ is chosen on $\partial \Omega.$ The projection of plane's normal onto the $z=0$ plane is chosen such that it is perpendicular to the tangent line at the point of degeneracy.  A major and minor axes of $A=8$ and $B=6$, respectively, is used throughout the discussion. Other ratios were explored and share similar observations.

Let $s(x,y)=q(x,y)$ and $\gamma=1.$ In such case, the defect contributes to the overall size of the source function in the numerical computation.  Hence, the quenching time is anticipated to be lower compared to the degenerate-free case.  In fact, for the particular domain shape and size the quenching times for the degenerate and degenerate free cases are determined to be $0.860857$ and $0.278238,$ respectively, in line with this conjecture.

It has been observed numerically that for rectangular domains the quenching location shifts toward the degeneracy $(x^*,y^*).$ Consider the cases $\theta^* = n \pi/4$ for $n=0,\ldots, 7.$  The computation utilizes fine fixed uniform grids to determine the quenching location.  Remarkably, for a degeneracy located at $\theta^*$ on the boundary, the quenching location is the same distance $d$ in the inward normal direction for each computation! In other words, the quenching location for each case of $\theta^*$ forms an inner annulus, which is not necessarily an ellipse. More precisely, the quenching location $(\tilde{x},\tilde{y})$ is
 \begin{eqnarray*}
\tilde{x} &=& \frac{1-d B^2}{\sqrt{B^4 (x^*)^2 + A^4 (y^*)^2}}x^*, \\
\tilde{y} &=& \frac{1-d A^2}{\sqrt{B^4 (x^*)^2 + A^4 (y^*)^2}}y^*,
\end{eqnarray*}
where $(x^*,y^*)$ is the degeneracy location and $d$ is a fixed constant.  Clearly this observation depends on the type of degeneracy, however it indicates that the curvature of the boundary at the point of degeneracy plays a delicate role in influencing the quenching location.  Figure \ref{Example3Loc}(a) shows the elliptical boundary and the corresponding quenching locations for each value of $n.$  The dashed line is the corresponding ellipse with the same shape as the boundary and passing through the quenching locations for $n=0$ and $4.$ Figure \ref{Example3Sol}(a) shows a contour plot of the solution in the iteration just prior to quenching in the $n=1$ case.

\begin{figure}[ht!]
\begin{center}
\includegraphics[scale=.35]{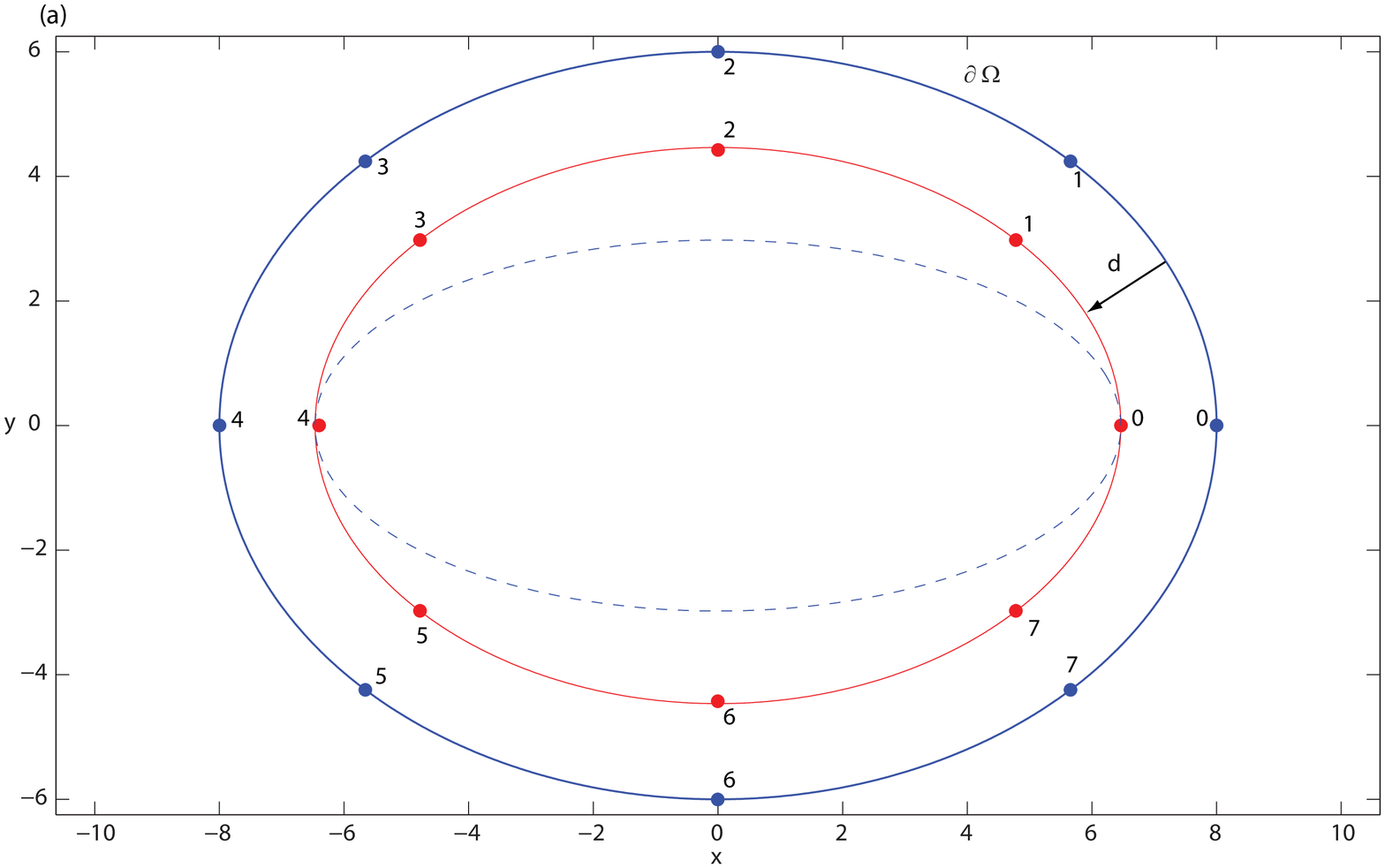}
\includegraphics[scale=.35]{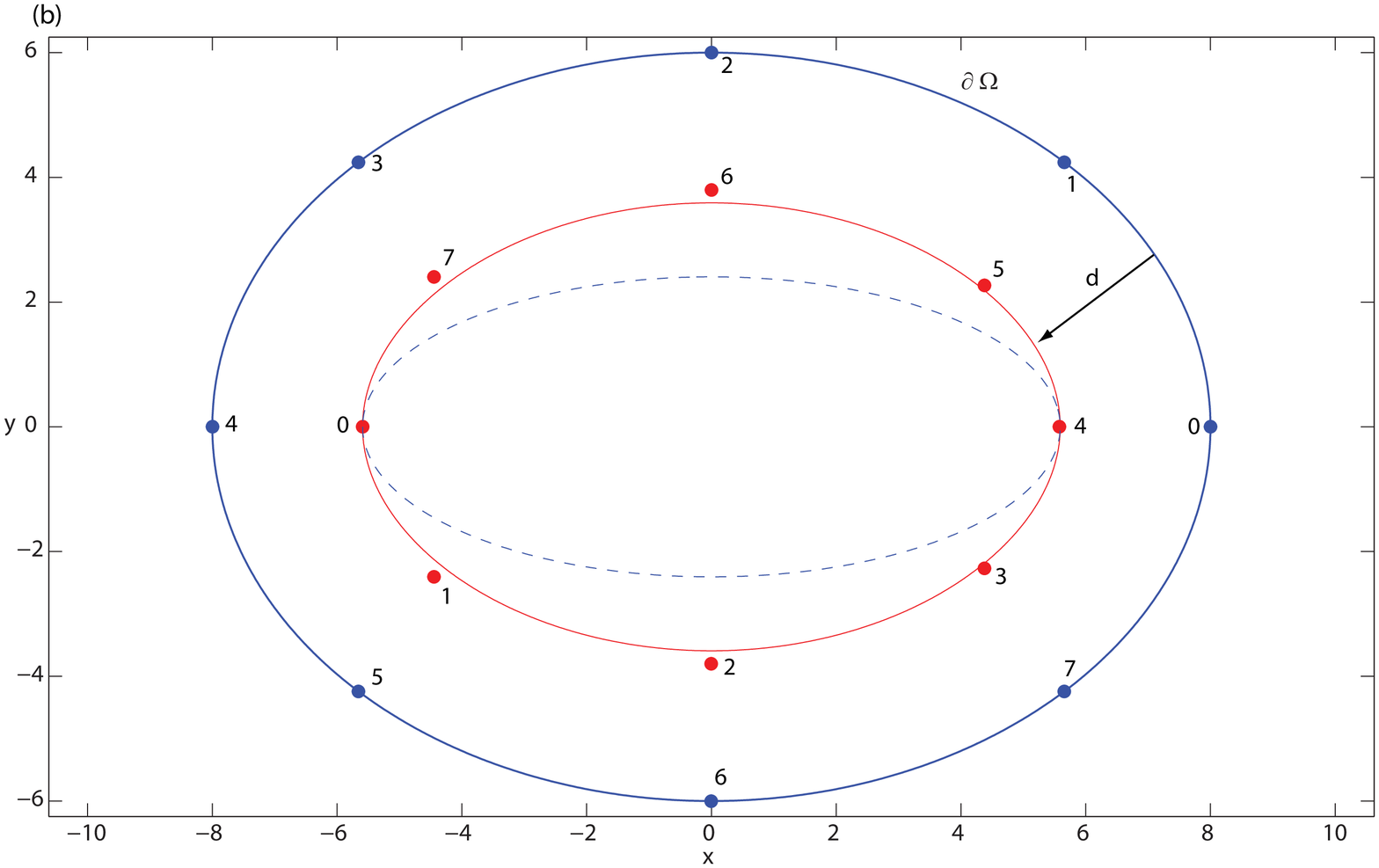}
\caption{The quenching locations for various defects in the (a) transportation and (b) diffusion of heat.  The outer boundary ($\partial \Omega$) is shown. A dashed inner elliptical boundary that passes through the $n=0$ and $n=4$ quenching locations is given for scale.  The annulus given by a fixed distance $d$ from the outer boundary is given, for which the quenching locations closely follow.  The number located at each enclosed circle pairs the corresponding degeneracy with the quenching location for $n=0,\ldots,7.$}
\label{Example3Loc}
\end{center}
\end{figure}

Let $s(x,y)=1/q(x,y).$ Again, the single defect is located on the boundary in increments of $\pi/4.$ At these locations $s(x,y)$ is singular, effectively lessening the diffusive effects. Hence, it is still expected that the quenching time will be less than the degenerate-free case.  However, it will not be more than the previous case of the defect in the transportation of heat. This is because the size of the source term is lessened.  For this particular shape, the quenching time is calculated to be $0.571324.$

Interestingly, a similar pattern emerges in this case.  Again, the quenching locations closely fall on an inner annulus constructed by tracing a path a distance $d$ from the outer boundary.  However, the quenching location shifts away from the degeneracy rather than toward it.  These results are shown in Figure \ref{Example3Loc}(b).  A contour plot of the solution just prior to quenching is also shown in Figure \ref{Example3Sol}(b).

\begin{figure}[h]
\begin{center}
\includegraphics[scale=.35]{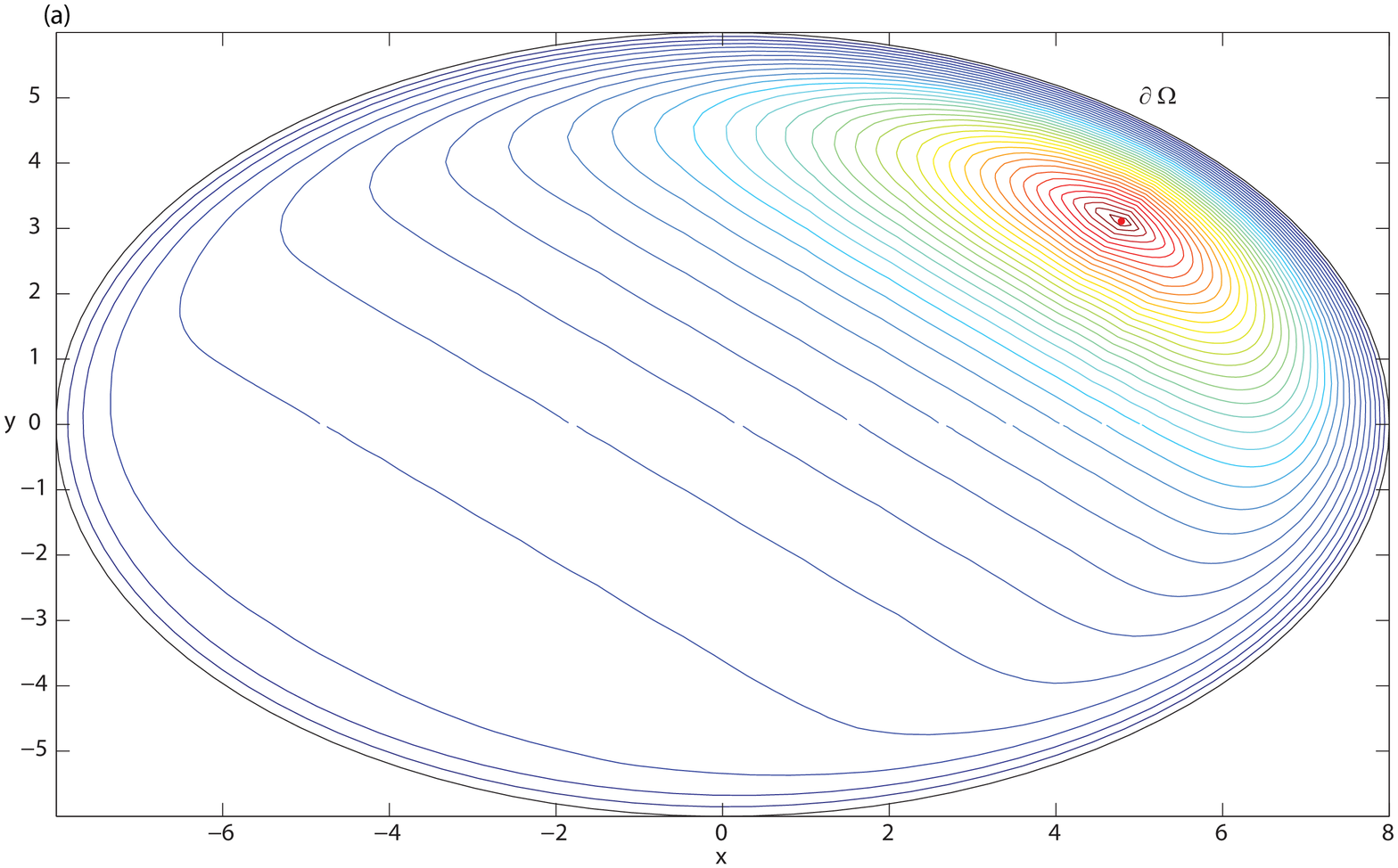}
\includegraphics[scale=.35]{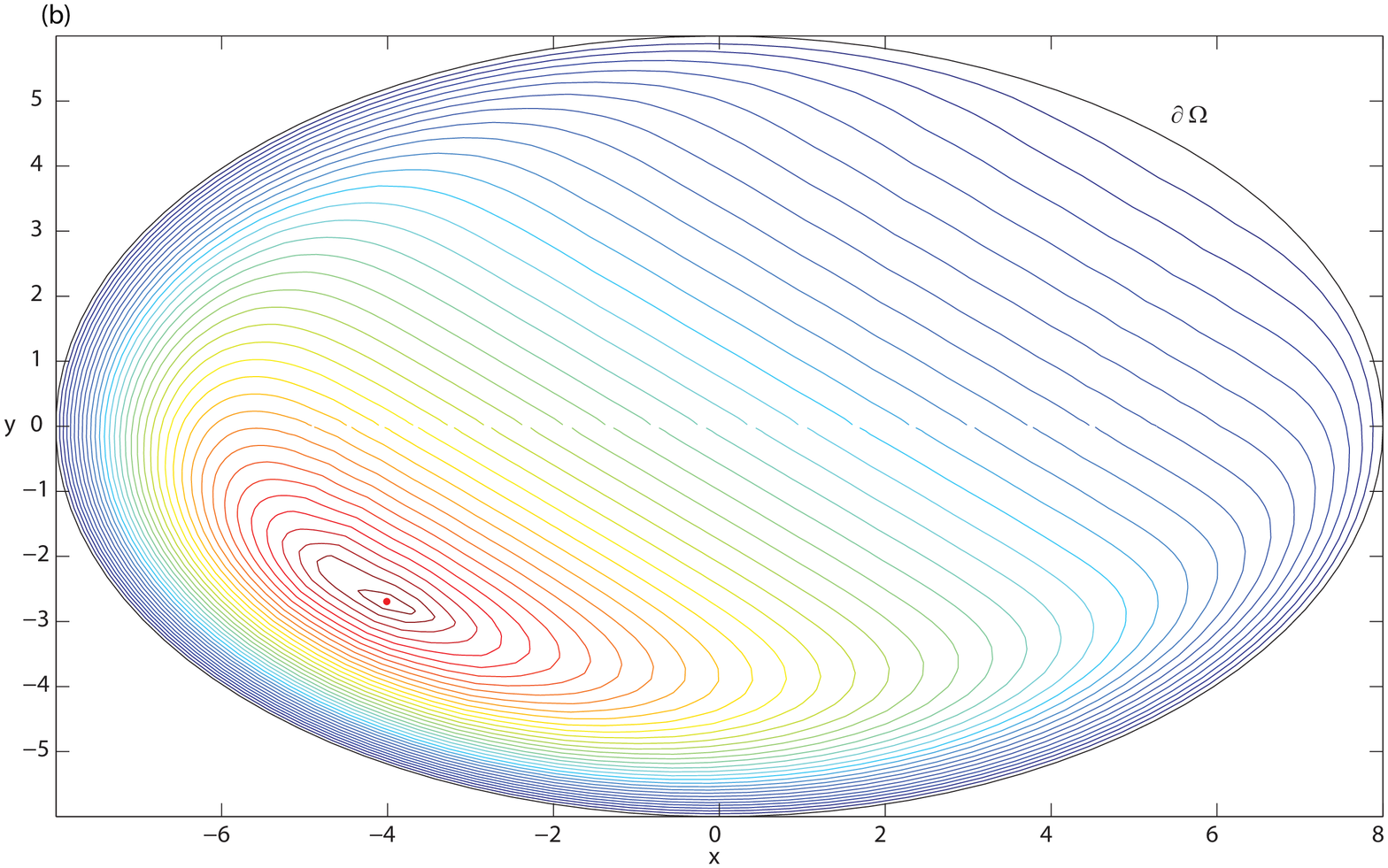}
\caption{Contour plots of numerical solution for defects in the (a) transportation and (b) diffusion of heat.  In each case the defect is located on the boundary at $\theta^*=\pi/4.$ The quenching location is also identified by a dot in each of the plots.}
\label{Example3Sol}
\end{center}
\end{figure}

\section{Conclusions}

This paper designs, implements, and analyzes a temporally adaptive splitting scheme that utilizes nonuniform grids for singular reaction diffusion equations with a nonlinear absorption term.  The multi-faceted quenching phenomena presents challenges to resolving dynamics close to the onset of quenching, in determining the effect of defects on the quenching location and dynamics of the solution, and in determining critical quenching sizes for particular domain shapes.  This paper presents improvements to our understanding in all of these categories adding interesting general results while, at the heart of the paper, in application to elliptical domains.

Determining theoretical estimates to the critical quenching domain size for a given shape is a delicate and difficult task.  This is evidenced by the literature devotion to existence theorems of critical quenching domain, rather than estimates of the size.  Only for rectangular domains such estimates have surfaced theoretically and numerically. This focus is not wasted, rather is a blessing to exploring other domain shapes.  In particular, the acquired knowledge of critical quenching domains for rectangular domains can be used to construct lower and upper solutions for quenching problems posed over arbitrary piecewise smooth domains.  This is then used to construct theoretical estimates to the critical quenching domain sizes. In this regard, this result can be extended to blow-up problems as well.  Additionally, through comparable techniques it is our belief that sharp estimates can be established and is a target of our future research.

The governing equations are posed over an elliptical domain and the numerical scheme is subsequently developed.  First, a coordinate transformation maps the elliptical domain into a rectangular one.  This requires an additional boundary condition, which is realized through investigating certain translational symmetries in the coordinate transformation.  The proposed boundary condition can be implemented in a straight-forward way in the spatial discretization.  The diffusion operators are discretized using nonuniform central differences.  The current study considers fixed grids, however the grids are developed through an adaptive procedure called exponentially evolving grids. The numerical solution at any iterate provides information in order to develop a least squares approximation to the solution's temporal derivative.  The collocation function is chosen such that certain properties are available, namely the finest resolution is placed near the location of the solution's extrema.  Experiments suggest this approach develops smooth mapping functions from the logical to the physical spaces, indicating a second order accuracy.  Future studies are under way to better understand this type of adaptation and, hence, is reserved for a later follow-up paper.

The ensuing numerical analysis indicates that provided certain criteria are met, that the numerical solution monotonically increases and remains positive throughout the computation.  A general theorem is provided that can be used to establish this result for any problem that its semi-discretization is of the form of (\ref{GovEqs:MOL}). Hence, while the primary discussion is focused on quenching problems over elliptical domains, the theorem makes only use of the form of the semi-discretization and provides criteria for the involved matrices.  Weak stability is then established in the von Neumann sense in the $\infty-$norm. While this lacks the full nonlinear stability analysis the experiments provide a novel way of quantifying the effects of error throughout a computation in a statistical sense.  It is true that for nonlinear problems, the stability analysis of any developed scheme is a difficult task.  Often, weak stability is all that is offered.  However, the cumulative effect of error can be statistically analyzed using Monte Carlo methods.  It is our belief that this methodology could be used as an alternative that offers valid experimental and statistical evidence toward the stability of the numerical scheme.  In this light, the experiments indicate remarkable stability to continued perturbations throughout the computation.

Defects to the transportation and diffusion of heat are physically motivated in addition to mathematically interesting to their effect on the solution to the partial differential equation.  Here, the scheme is used to analyze the effect of such defects.  It has been observed that in the context of rectangular domains defects in the transportation of heat results in movement of the quenching location toward the defect.  This phenomena is observed for elliptical domains.  To our knowledge, no such study has been presented on the effect of defects in the diffusion of heat.  Interestingly, the quenching location shifts away from the defect! While these results have been established numerically it has yet to be seen theoretically and is left as an open problem.

\end{document}